\newcommand{\be}{\begin{equation}}
\newcommand{\ee}{\end{equation}}
\newcommand{\beq}{\begin{eqnarray}}
\newcommand{\eeq}{\end{eqnarray}}
\newtheorem{prop}{Proposition}[section]
\newtheorem{remark}[prop]{Remark}
\def\begeq{\begin{equation}}
\def\endeq{\end{equation}}
\def\odot{\setbox0=\hbox{$\bigcirc$}\relax \mathbin {\hbox
to0pt{\raise.5pt\hbox to\wd0{\hfil $\wedge$\hfil}\hss}\box0 }}
\numberwithin{equation} {section}
\numberwithin{equation}{section}
\newtheorem{theorem}{\bf Theorem}[section]
\newtheorem{definition}[theorem]{\bf Definition}
\newtheorem{lemma}[theorem]{\bf Lemma}
\newtheorem{corollary}[theorem]{\bf Corollary}
\begin{document}
\title[Prescribed Weingarten curvature equations in warped product manifolds]
 {Prescribed Weingarten curvature \\ equations in warped product manifolds}

\author{
 Ya Gao,~~Chenyang Liu,~~Jing Mao$^{\ast}$}

\address{
Faculty of Mathematics and Statistics, Key Laboratory of Applied
Mathematics of Hubei Province, Hubei University, Wuhan 430062, China
}

\email{Echo-gaoya@outlook.com, 1109452431@qq.com, jiner120@163.com}

\thanks{$\ast$ Corresponding author}

\date{}
\maketitle
\begin{abstract}
In this paper, under suitable settings, we can obtain the existence
of solutions to a class of prescribed Weingarten curvature equations
in \emph{warped product manifolds} of special type by the standard
degree theory based on the \emph{a priori estimates} for the
solutions. This is to say that the existence of closed hypersurface
(which is graphic with respect to the base manifold and whose $k$-th
Weingarten curvature satisfies some constraint) in a given warped
product manifold of special type can be assured.
\end{abstract}

\maketitle {\it \small{{\bf Keywords}: Prescribed Weingarten
curvature equations, $k$-convex, starshaped, warped product
manifolds.

 }

{{\bf MSC 2020}: 53C42, 35J60.}}

\section{Introduction} \label{S1}

Throughout this paper, let $(M^{n},g)$ be a compact Riemannian
$n$-manifold with the metric $g$, and let $I$ be an (unbounded or
bounded) interval in $\mathbb{R}$. Clearly,
$\bar{M}:=I\times_{f}M^{n}$ is actually the $(n+1)$-dimensional
warped product manifold (sometimes, for simplicity, just say
\emph{warped product}) endowed with the following metric
\begin{eqnarray} \label{wpm}
\bar{g}=dt^{2}+f^{2}(t)g,
\end{eqnarray}
where $f:I\rightarrow\mathbb{R}^{+}$ is a positive differential
function defined on $I$. Given a differentiable function
$u:M^{n}\rightarrow I$, its graph actually corresponds to the
following graphic hypersurface
\begin{eqnarray} \label{gr-1}
\mathcal{G}=\{X(x)=(u(x),x)|x\in M^{n}\}
\end{eqnarray}
in $\bar{M}$. Equivalently, we can say that $\mathcal{G}$ is graphic
w.r.t. \emph{the base manifold} $M^{n}$. Denote by $\bar\nabla$, $D$
the Riemannian connections on $\bar{M}$ and $M^{n}$, respectively.
Let $\{e_{i}\}_{i=1,2,\cdots,n}$ be an orthonormal frame field in
$M^{n}$. Then one can find an orthonormal frame field
$\{\bar{e}_{\alpha}\}_{\alpha=0,1,\cdots,n}$ in $\bar{M}$ such that
$\bar{e}_{i}=(1/f)e_{i}$, $1\leq\alpha=i\leq n$ and
$\bar{e}_{0}=\partial/\partial t$. The existence of the frame fields
can always be assured in the tangent space of a prescribed point.
 Denote by\footnote{~Clearly, for accuracy, here $D_{i}u$
should be $D_{e_{i}}u$. In the sequel, without confusion and if
needed, we prefer to simplify covariant derivatives like this. In
this setting, $u_{ij}:=D_{j}D_{i}u$, $u_{ijk}:=D_{k}D_{j}D_{i}u$
mean $u_{ij}=D_{\partial_{j}}D_{\partial_{i}}u$ and
$u_{ijk}=D_{\partial_{k}}D_{\partial_{j}}D_{\partial_{i}}u$,
respectively. We will also simplify covariant derivatives on
$\mathcal{G}$ and $\bar{M}$ similarly if necessary.}
$u_{i}:=D_{i}u$, $u_{ij}:=D_{j}D_{i}u$, and
$u_{ijk}:=D_{k}D_{j}D_{i}u$ the covariant derivatives of $u$ w.r.t.
the metric $g$.  Clearly, the tangent vectors of $\mathcal{G}$ are
given by
\begin{eqnarray*}
X_{i}=(D u,1)=e_{i}+u_{i}\partial/\partial
t=f\bar{e}_{i}+u_{i}\bar{e}_0, \qquad i=1,2,\ldots,n.
\end{eqnarray*}
Let $\langle\cdot,\cdot\rangle$ be the inner product w.r.t. the
metric $\bar{g}$. Then the induced metric $\widetilde{g}$ on
$\mathcal{G}$ has the form
\begin{equation*}\label{g_{ij}}
\widetilde{g}_{ij}=\langle
X_{i},X_{j}\rangle=f^2\delta_{ij}+u_{i}u_{j},
\end{equation*}
its inverse is given by
\begin{equation*}\label{g^{ij}}
\widetilde{g}^{ij}=\frac{1}{f^2}\left(\delta^{ij}-\frac{u^i  u^j
}{f^{2}+|D u|^2}\right),
\end{equation*}
where $u^{i}=g^{ij}u_{j}=\delta^{ij}u_{j}$ and $|D u|^2=u^{i}u_{i}$.
Of course, in this paper we use the Einstein summation convention --
repeated superscripts and subscripts should be made
summation\footnote{~In this setting, repeated Latin letters should
be made summation from $1$ to $n$.}. The outward unit normal vector
field of $\mathcal{G}$ is given by
\begin{eqnarray*}\label{nu}
\nu=\frac{1}{\sqrt{f^{2}+|D u|^2}}\left(f\frac{\partial}{\partial
t}-u^{i}f^{-1}e_{i}\right)=\frac{1}{\sqrt{f^{2}+|D
u|^2}}\left(f\bar{e}_0-u^{i}\bar{e}_i\right),
\end{eqnarray*}
and the component $h_{ij}$ of the second fundamental form $A$ of
$\mathcal{G}$ is computed as follows
\begin{equation}\label{h_{ij}}
h_{ij}=-\langle\bar{\nabla}_{X_{j}}X_{i},\nu\rangle
=\frac{1}{\sqrt{f^{2}+|D
u|^2}}\left(-fu_{ij}+2f'u_{i}u_{j}+f^{2}f'\delta_{ij}\right).
\end{equation}
One can also see \cite[Subsection 2.2]{clw} for the computations of
the above geometric quantities.
 Denote by $\lambda_{1},\lambda_{2},\ldots,\lambda_{n}$
the principal curvatures of $\mathcal{G}$, which are actually the
eigenvalues of the matrix $(h_{ij})_{n\times n}$ w.r.t. the metric
$\widetilde{g}$. The so-called \emph{$k$-th Weingarten curvature} at
$X(x)=(u(x),x)\in\mathcal{G}$ is defined as
 \begin{eqnarray}  \label{kwc}
\sigma_{k}(\lambda_{1}, \lambda_{2}, \cdots,
\lambda_{n})=\sum\limits_{1\leq i_{1}<i_{2}<\cdots<i_{k}\leq
n}\lambda_{i_{1}}\lambda_{i_{2}}\cdots\lambda_{i_{k}}.
 \end{eqnarray}
$V=f(u)\frac{\partial}{\partial t}$ is the position vector
field\footnote{~In $\mathbb{R}^{n+1}$ or the hyperbolic
$(n+1)$-space $\mathbb{H}^{n+1}$, there is no need to define the
vector field $V$ since these two spaces are two-points homogeneous
and global coordinate system can be set up, and then $X(x)$ can be
seen as the position vector directly. } of hypersurface
$\mathcal{G}$ in $\bar{M}$, and clearly, for any $x\in M^{n}$,
$V|_{x}$ is a one-to-one correspondence with $X(x)$. Let $\nu(V)$ be
the outward unit normal vector field along the hypersurface
$\mathcal{G}$ and $\lambda(V)=(\lambda_{1}, \lambda_{2}, \cdots,
\lambda_{n})$ be the principal curvatures of $\mathcal{G}$ at $V$.
Define the annulus domain $\bar{M}^{+}_{-}\subset\bar{M}$ as follows
 \begin{eqnarray*}  \label{annd}
\bar{M}^{+}_{-}:=\{(t,x)\in\bar{M}|r_{1}\leq t\leq r_{2}\}
 \end{eqnarray*}
with $r_{1}<r_{2}$. In this paper, we consider the following
Weingarten curvature equation
 \begin{eqnarray} \label{main equation}
 \sigma_{k}(\lambda(V))=\sum\limits_{l=0}^{k-1}\alpha_{l}(u(x),x)\sigma_{l}(\lambda(V)),
 \quad \forall V\in\mathcal{G}, \quad 2\leq k\leq n,
 \end{eqnarray}
where $\{\alpha_{l}(u(x),x)\}_{l=0}^{k-1}$ are given smooth
functions defined on $\mathcal{G}$. The $k$-th Weingarten curvature
$\sigma_{k}(\lambda(V))$ is also called $k$-th mean curvature.
Besides, when $k=1$, $2$ and $n$, $\sigma_{k}(\lambda(V))$
corresponds to the mean curvature, the scalar curvature and the
Gaussian curvature of $\mathcal{G}$ at $V$.

 We also need the
following conception:
\begin{definition}
For $1\leq k\leq n$, let $\Gamma_{k}$ be a cone in $ \mathbb{R}^{n}$
determined by
\begin{eqnarray*}
\Gamma_{k}=\{\lambda\in\mathbb{R}^{n}|\sigma_{l}(\lambda)>0,
~~l=1,2,\ldots,k\}.
\end{eqnarray*}
A smooth graphic hypersurface $\mathcal{G}\subset\bar{M}$ is called
$k$-admissible if at every position vector $V\in\mathcal{G}$,
 $(\lambda_{1},\lambda_{2},\ldots,\lambda_{n})\in\Gamma_{k}$.
\end{definition}

For the Eq. (\ref{main equation}), we can prove the following:

\begin{theorem} \label{maintheorem}
Let $M^{n}$ be a compact Riemannian $n$-manifold ($n\geq3$) and
$\bar{M}=I\times_{f}M^{n}$, with the metric (\ref{wpm}), be the
warped product manifold defined as before. Assume that the warping
function $f$ is positive differential, $f'>0$, and
$\alpha_{l}(u(x),x)\in C^{\infty}(I\times M^{n})$ are positive
functions for all $0\leq l\leq k-1$. Suppose that
\begin{eqnarray} \label{as-1}
\sigma_{k}(e)\left(\frac{f'}{f}\right)^{k}\geq\sum\limits_{l=0}^{k-1}\alpha_{l}(u,x)\sigma_{l}(e)\left(\frac{f'}{f}\right)^{l}
\qquad for~u\geq r_{2},
\end{eqnarray}
\begin{eqnarray}  \label{as-2}
\sigma_{k}(e)\left(\frac{f'}{f}\right)^{k}\leq\sum\limits_{l=0}^{k-1}\alpha_{l}(u,x)\sigma_{l}(e)\left(\frac{f'}{f}\right)^{l}
\qquad for~0<u\leq r_{1},
\end{eqnarray}
and
\begin{eqnarray}  \label{as-3}
\frac{\partial}{\partial
u}\left[f^{k-l}(u)\alpha_{l}(u,x)\right]\leq0 \quad for~
r_{1}<u<r_{2},
\end{eqnarray}
where $[r_{1},r_{2}]\subset I$, $e=(1,1,\cdots,1)$. Then there
exists a smooth $k$-admissible, closed graphic hypersurface
$\mathcal{G}$ contained in the interior of the annulus
$\bar{M}^{+}_{-}$ and satisfying the Eq. (\ref{main equation}).
\end{theorem}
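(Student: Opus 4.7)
The plan is to recast equation (\ref{main equation}) as a fully nonlinear elliptic PDE for the graph function $u:M^{n}\to I$, establish uniform a priori $C^{2,\alpha}$ estimates in the class of $k$-admissible graphs whose image lies in the annulus $\bar{M}^{+}_{-}$, and then conclude existence by a Leray--Schauder degree argument along a one-parameter homotopy. Using (\ref{h_{ij}}) together with the expression for $\widetilde{g}^{ij}$, the principal curvatures $\lambda[u]$ of $\mathcal{G}$ are rational functions of $D^{2}u,\,Du,\,u$, and (\ref{main equation}) becomes
$$F(D^{2}u,Du,u,x):=\sigma_{k}(\lambda[u])-\sum_{l=0}^{k-1}\alpha_{l}(u,x)\,\sigma_{l}(\lambda[u])=0,$$
which is elliptic and concave in $D^{2}u$ when restricted to $k$-admissible $u$. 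It is convenient to perform the change of variable $v=\varphi(u)$ with $\varphi'(t)=1/f(t)$ so that geometric quantities simplify; I would work with $v$ throughout the estimates.

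The $C^{0}$ bound follows directly from (\ref{as-1})--(\ref{as-2}). At an interior maximum of $u$ one has $Du=0$ and $D^{2}u\leq 0$, so by (\ref{h_{ij}}) each principal curvature at that point equals $f'(u_{\max})/f(u_{\max})$ contracted with a symmetric positive factor; substituting into (\ref{main equation}) and comparing with (\ref{as-1}) forces $u_{\max}<r_{2}$, and the symmetric argument at a minimum with (\ref{as-2}) gives $u_{\min}>r_{1}$. For the gradient estimate I would apply the maximum principle to an auxiliary quantity of the form $\tfrac{1}{2}\log(f^{2}+|Du|^{2})+\psi(u)$, or equivalently to the support function $\langle V,\nu\rangle^{-1}$ with a suitable radial weight; the sign $f'>0$, together with the lower bound on $\sigma_{k}^{1/k}$ induced by the equation on $k$-admissible graphs, produces the good terms needed to absorb the gradient.

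The main obstacle is the $C^{2}$ estimate, namely a uniform upper bound on the largest principal curvature $\lambda_{\max}$. I would apply the maximum principle on $\mathcal{G}$ to a test function of the form
$$W=\log\lambda_{\max}+\Phi(|Du|^{2})+\Psi(u),$$
differentiate twice along $\mathcal{G}$, and use the concavity of $\sigma_{k}^{1/k}$ on the cone $\Gamma_{k}$ together with the previous $C^{0},C^{1}$ bounds to handle the third-order and ambient-curvature terms. The genuinely dangerous contribution comes from differentiating the right-hand side of (\ref{main equation}) in $u$, producing $-\sum_{l}(\partial_{u}\alpha_{l})\sigma_{l}$; after the substitution $v=\varphi(u)$ and the usual rearrangement this quantity is proportional to $\sum_{l}\partial_{u}[f^{k-l}\alpha_{l}]\sigma_{l}$, whose sign is controlled precisely by the structure hypothesis (\ref{as-3}), allowing one to close the maximum-principle inequality. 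Once $\lambda_{\max}\leq C$, the $k$-admissibility combined with a uniform lower bound on $\sigma_{k}$ forces $\lambda[u]$ to lie in a fixed compact subset of $\Gamma_{k}$; the operator is then uniformly elliptic and concave, so Evans--Krylov followed by Schauder iteration yields $C^{m,\alpha}$ bounds for every $m$.

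Finally, I would embed (\ref{main equation}) in a homotopy $F_{t}(u)=0$, $t\in[0,1]$, joining at $t=0$ to an equation whose unique $k$-admissible solution is an explicit coordinate slice $u\equiv c$, determined by $\sigma_{k}(e)(f'(c)/f(c))^{k}=\sum_{l}\alpha^{(0)}_{l}\sigma_{l}(e)(f'(c)/f(c))^{l}$. The conditions (\ref{as-1})--(\ref{as-3}) are preserved under a natural convex interpolation between the reference and the target coefficients, so the a priori estimates above apply uniformly along the homotopy, confining the solution set to a bounded open subset $\mathcal{O}$ of the admissible cone in $C^{4,\alpha}(M^{n})$. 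The Leray--Schauder degree $\deg(F_{t},\mathcal{O},0)$ is then well-defined and homotopy invariant; the linearization at $t=0$ at the constant solution is an invertible self-adjoint elliptic operator (by the eigenvalue structure dictated by the sign of $\partial_{u}[f^{k-l}\alpha_{l}]$ and $f'>0$), giving $\deg(F_{0},\mathcal{O},0)\neq 0$, whence $F_{1}$ has a zero in $\mathcal{O}$, producing the desired hypersurface $\mathcal{G}$. The chief obstacle throughout is the second-order estimate, where assumption (\ref{as-3}) plays its essential role.
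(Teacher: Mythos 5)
Your overall architecture ($C^{0}$ from \eqref{as-1}--\eqref{as-2} at extrema, gradient bound via the support function $\tau=\langle V,\nu\rangle$, curvature bound by a maximum principle, Evans--Krylov/Schauder, then degree theory with a homotopy to a slice solution) is the same as the paper's. However, there are two substantive problems with the way you propose to close the estimates. First, you have misplaced where the structural hypothesis \eqref{as-3} is needed. In the gradient estimate, differentiating the equation produces the term $\Lambda_{1}\nabla_{1}\alpha_{l}+(k-l)f'\alpha_{l}$ (tangential derivative of the coefficient paired with $\langle V,E_{1}\rangle$, plus the term coming from $G^{ij}h_{ij}$), and this is of size $O(|Du|)$ with no favorable sign in general; it is precisely the identity $\Lambda_{1}\nabla_{1}\alpha_{l}+(k-l)f'\alpha_{l}=f\,\partial_{u}\alpha_{l}+(k-l)f'\alpha_{l}-\tau\bar{\nabla}_{\nu}\alpha_{l}$ together with \eqref{as-3} (which says exactly $(k-l)f'\alpha_{l}+f\,\partial_{u}\alpha_{l}\leq0$) that kills it. Your claim that ``$f'>0$ together with the lower bound on $\sigma_{k}^{1/k}$'' supplies the good terms does not address this contribution, so your $C^{1}$ argument does not close as written. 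Conversely, in the $C^{2}$ estimate the coefficient-derivative terms appear as $\nabla_{p}\nabla_{p}\alpha_{l}\,G_{l}/H$ and are harmless once $|G_{l}|$ is bounded (via Newton--Maclaurin and $\alpha_{l}\geq c_{l}>0$); the estimate is closed by the quadratic good term $|A|^{2}\alpha_{k-1}/H$, not by \eqref{as-3}.

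Second, the operator $F=\sigma_{k}(\lambda)-\sum_{l=0}^{k-1}\alpha_{l}\sigma_{l}(\lambda)$ is \emph{not} concave in $D^{2}u$ on the admissible cone, contrary to what you assert; concavity (and the correct ellipticity cone $\Gamma_{k-1}$) is obtained only after passing to the quotient form $\sigma_{k}/\sigma_{k-1}-\sum_{l\leq k-2}\alpha_{l}\,\sigma_{l}/\sigma_{k-1}=\alpha_{k-1}$, using the concavity of $\sigma_{k}/\sigma_{k-1}$ and of $-\sigma_{l}/\sigma_{k-1}$ (this is the Krylov/Guan--Zhang observation, Lemma \ref{ellip-} in the paper). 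This reformulation is not cosmetic: it is what makes the term $G^{ij,rs}h_{ijp}h_{rsp}$ controllable in the second-order estimate and what licenses the application of Evans--Krylov. Your degree-theoretic endgame (homotopy to a slice, invertibility of the linearization from the sign of $\varphi'$ and $f'>0$) is fine and matches the paper.
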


\begin{remark} \label{remark-1}
\rm{ (1) The $k$-admissible and the graphic properties of the
hypersurface $\mathcal{G}$ make sure that the Eq. (\ref{main
equation}) is a single scalar second-order elliptic PDE of the
graphic function $u$, which is the cornerstone of the a prior
estimates given below. If furthermore $M^{n}$ is convex, then
$M^{n}$ is diffeomorphic to $\mathbb{S}^{n}$ (i.e., the Euclidean
unit $n$-sphere), $\mathcal{G}$ is also a graphic hypersurface over
$\mathbb{S}^{n}$ and should be starshaped. In this setting, Theorem
\ref{maintheorem} degenerates into the following:
\begin{itemize}
 \item \underline{\textbf{FACT 1}}. \emph{Under the assumptions of Theorem \ref{maintheorem}, if furthermore $M^{n}$ is convex, then there
exists a smooth $k$-admissible, starshaped closed hypersurface
$\mathcal{G}$ contained in the interior of the annulus
$\bar{M}^{+}_{-}$ and satisfying the Eq. (\ref{main equation}).}
\end{itemize}
(2) We refer readers to, e.g., \cite[Appendix A]{mdw}, \cite[pp.
204-211 and Chapter 7]{bon} for an introduction to the notion and
properties of warped product manifolds. Submanifolds in warped
product manifolds have nice geometric properties and interesting
results can be expected -- see, e.g., several nice eigenvalue
estimates for the drifting Laplacian and the nonlinear $p$-Laplacian
on minimal submanifolds in warped product manifolds of prescribed
type have
been shown in \cite[Sections 3-5]{lmwz}.\\
 (3) The Eq. (\ref{main equation}) is actually a combination of
 elementary symmetric functions of eigenvalues of a given
 $(0,2)$-type tensor. Equations of this type are important not only in the study of PDEs but also
  in the study of many important geometric problems. For instance,
  if $\lambda(V)$ in the Eq. (\ref{main equation}) were replaced by
  eigenvalues of the Hessian $D^{2}u$ of a graphic function $u$ defined over a
  bounded $(k-1)$-convex domain $\Omega\subset\mathbb{R}^{n}$, Krylov
  \cite{nk} studied the corresponding PDE
\begin{eqnarray} \label{ME-2}
 \sigma_{k}(D^{2}u(x))=\sum\limits_{l=0}^{k-1}\alpha_{l}(x)\sigma_{l}(D^{2}u(x)), \quad \forall x\in\Omega,
\end{eqnarray}
   with a prescribed
  Dirichlet boundary condition (DBC for short) and coefficients $\alpha_{l}(x)\geq0$
  for all $0\leq l\leq k-1$, and observed that the natural admissible cone to make equation elliptic
  is $\Gamma_{k}$; recently, Guan-Zhang \cite{gz} showed that comparing with Krylov's this observation, for
  the admissible solution of Eq. (\ref{ME-2}) with prescribed DBC in the
  sense that $\lambda(D^{2}u)\in\Gamma_{k-1}$,
  there is no sign requirement for the coefficient
function of $\alpha_{k-1}(x)$. Moreover, they also investigated the
solvability of the following fully nonlinear elliptic equation
 \begin{eqnarray*}
\sigma_{k}(D^{2}u+uI)=\sum\limits_{l=0}^{k-1}\alpha_{l}(x)\sigma_{l}(D^{2}u+uI),
\quad \forall x\in\mathbb{S}^n,
 \end{eqnarray*}
 for some unknown function $u:\mathbb{S}^{n}\rightarrow\mathbb{R}$ defined over $\mathbb{S}^{n}$, where $\alpha_{l}(x)$, $0\leq l\leq k-2$,
 are positive functions;
   Fu-Yau \cite{fy1,fy2} proposed an
  equation of this type in
   the study of the Hull-Strominger system in theoretical
  physics; Phong-Picard-Zhang
  investigated the Fu-Yau equation and its generalization in series
  works \cite{ppz1,ppz2,ppz3}. Recently, inspired by Krylov's and
  Guan-Zhang's works \cite{gz,nk}, Chen-Shang-Tu \cite{cst}
  considered the following equation
   \begin{eqnarray} \label{ME-2}
\sigma_{k}(\kappa(X))=\sum\limits_{l=0}^{k-1}\alpha_{l}(X)\sigma_{l}(\kappa(X)),
\quad \forall X\in\mathcal{M}\subset\mathbb{R}^{n+1}, \qquad 2\leq
k\leq n
   \end{eqnarray}
on an embedded, closed starshaped $n$-hypersurface $\mathcal{M}$,
$n\geq3$, where $\kappa(X)$ are principal curvatures of
$\mathcal{M}$ at $X$, and $\alpha_{l}(x)$, $0\leq l\leq k-1$,
 are positive functions defined over $\mathcal{M}$. Under the
 $k$-convexity for $\mathcal{M}$ and several other growth
 assumptions (see \cite[Theorem 1.1]{cst}), they can show the existence of solutions to Eq.
 (\ref{ME-2}). This result has already been generalized by Shang-Tu
 \cite{st1} to the situation that the ambient space $\mathbb{R}^{n+1}$ was replaced by
 the hyperbolic space $\mathbb{H}^{n+1}$.
   }
\end{remark}

If $M^{n}=\mathbb{S}^{n}$, $I=(0,\ell)$ with $0<\ell\leq\infty$,
putting a one-pint compactification topology by identifying all
pairs $\{0\}\times\mathbb{S}^{n}$ with a single point $p^{\ast}$ to
$\bar{M}$ (see, e.g., \cite[page 705]{fmi} for this notion) and
requiring that $f(0)=0$, $f'(0)=1$, then the warped product manifold
$\bar{M}$ becomes the spherically symmetric manifold
$\widetilde{M}:=[0,\ell)\times_{f}\mathbb{S}^{n}$. The single point
$p^{\ast}$ is called the \emph{base point} of $\widetilde{M}$.
Applying \textbf{FACT 1} in Remark \ref{remark-1} directly, one has:

\begin{corollary} \label{coro-1}
Under the assumptions of Theorem \ref{maintheorem} with additionally
$M^{n}=\mathbb{S}^{n}$, $I=(0,\ell)$ with $0<\ell\leq\infty$,
one-pint compactification topology imposed, $f(0)=0$ and $f'(0)=1$,
then there exists a smooth $k$-admissible, starshaped (w.r.t. the
base point $p^{\ast}$), closed hypersurface $\mathcal{G}$ contained
in the interior of the annulus $\bar{M}^{+}_{-}\subset\widetilde{M}$
and satisfying the Eq. (\ref{main equation}).
\end{corollary}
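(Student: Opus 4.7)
The plan is to derive Corollary \ref{coro-1} as a direct specialization of \textbf{FACT 1} in Remark \ref{remark-1}. First I would verify the structural hypotheses: the choice $M^{n}=\mathbb{S}^{n}$ is admissible because the unit $n$-sphere is a compact Riemannian $n$-manifold and is trivially convex, so the extra convexity assumption required by \textbf{FACT 1} holds automatically. The assumptions on the warping function ($f>0$, $f'>0$), the smoothness and positivity of the $\alpha_{l}$, and the three structural inequalities \eqref{as-1}--\eqref{as-3} are inherited verbatim from Theorem \ref{maintheorem} and are unaffected by the additional topological conventions imposed in the corollary.

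Second, I would explain the geometric identification underlying the corollary. Under the one-point compactification collapsing $\{0\}\times\mathbb{S}^{n}$ to the base point $p^{\ast}$, the boundary normalizations $f(0)=0$ and $f'(0)=1$ are precisely the standard conditions that guarantee that the warped metric \eqref{wpm} extends smoothly across $p^{\ast}$, so that $\widetilde{M}=[0,\ell)\times_{f}\mathbb{S}^{n}$ is a smooth spherically symmetric manifold with $p^{\ast}$ as its base point. Since $r_{1}>0$, the annulus $\bar{M}^{+}_{-}=\{(t,x):r_{1}\leq t\leq r_{2}\}$ lies strictly away from $p^{\ast}$, and hence entirely in the smooth warped-product region, where all of the a priori estimates underlying Theorem \ref{maintheorem} apply without modification.

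Third, I would make the translation between the two geometric descriptions explicit. In geodesic polar coordinates centered at $p^{\ast}$, a hypersurface of the form $\mathcal{G}=\{(u(x),x):x\in\mathbb{S}^{n}\}$ is exactly a radial graph $t=u(x)$ over $\mathbb{S}^{n}$, which is the classical definition of being \emph{starshaped with respect to} $p^{\ast}$. Applying \textbf{FACT 1} then yields a smooth $k$-admissible graphic hypersurface in the interior of $\bar{M}^{+}_{-}$ solving \eqref{main equation}, which by this correspondence is precisely the starshaped closed hypersurface asserted by Corollary \ref{coro-1}. The only conceivable obstacle is a regularity concern at the base point, but this is fully addressed by the normalization $f(0)=0$, $f'(0)=1$, while the interior confinement $r_{1}>0$ keeps the entire PDE analysis strictly away from $p^{\ast}$; so no new work beyond invoking \textbf{FACT 1} is required.
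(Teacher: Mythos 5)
Your proposal is correct and follows exactly the route the paper takes: the paper derives Corollary \ref{coro-1} by invoking \textbf{FACT 1} of Remark \ref{remark-1} directly, and you do the same, merely spelling out in more detail the verification that $\mathbb{S}^{n}$ is convex, that the normalizations $f(0)=0$, $f'(0)=1$ make $\widetilde{M}$ a smooth spherically symmetric manifold, and that a radial graph over $\mathbb{S}^{n}$ is starshaped with respect to $p^{\ast}$. No gap; your write-up is if anything more complete than the paper's one-line justification.
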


\begin{remark}
\rm{ (1) If furthermore the warping function $f$ satisfies
$f''(t)+Kf(t)=0$ for some constant $K$, i.e. the Jacobi equation,
then
\begin{eqnarray*}
f(t)=\left\{
\begin{array}{lll}
\sin(\sqrt{K}t)/\sqrt{K}, \quad \qquad & K>0,~\ell=\pi/\sqrt{K},\\
t, \quad \qquad & K=0,~\ell=\infty,\\
f(t)=\sinh(\sqrt{-K}t)/\sqrt{-K}, \quad \qquad & K<0,~\ell=\infty,
\end{array}
\right.
\end{eqnarray*}
and moreover, in this setting, $\widetilde{M}$ corresponds to
$\mathbb{S}^{n+1}(1/\sqrt{K})$ (i.e., the Euclidean $(n+1)$-sphere
with radius $1/\sqrt{K}$) with the antipodal point of $p^{\ast}$
missed, $\mathbb{R}^{n+1}$ and $\mathbb{H}^{n+1}(K)$ (i.e., the
hyperbolic $(n+1)$-space with constant curvature $K<0$),
respectively. From this, one can see that spherically symmetric
manifolds cover space forms as a special case and actually they were
called \emph{generalized space forms} by Katz and Kondo \cite{KK}.
\\
 (2) Clearly, our Corollary \ref{coro-1} covers Chen-Shang-Tu's and
 Shang-Tu's main results in \cite{cst,st1} (mentioned in (3) of Remark \ref{remark-1}) as special cases. \\
 (3) Spherically symmetric manifolds have nice symmetry in
 non-radial direction, which leads to the fact that one can use
 this kind of manifolds as model space in the study of
 comparison theorems. In fact, Prof. J.
 Mao and his collaborators have used spherically symmetric manifolds
 as model space to successfully obtain Cheng-type eigenvalue
 comparison theorems for the first Dirichlet eigenvalue of the Laplacian on complete manifolds with
 radial (Ricci and sectional) curvatures bounded, Escobar-type
 eigenvalue comparison theorem for the first nonzero Steklov
 eigenvalue of the Laplacian on complete manifolds with radial
 sectional curvature bounded from above, heat kernel and volume comparison
 theorems for complete manifolds with suitable curvature
 constraints, and so on -- see \cite{fmi,m1-1,m1-2,m1,ywmd} for details.
}
\end{remark}

This paper is organized as follows. In Section \ref{S2}, we will
list some useful formulas including several basic properties of
$\sigma_{k}$, structure equations for hypersurfaces in warped
product manifolds. A priori estimates (including $C^0$, $C^1$ and
$C^2$ estimates) for solutions to the Eq. (\ref{main equation}) will
be shown continuously in Sections \ref{S3}-\ref{S5}. In Section
\ref{S6}, by applying the degree theory, together with the a priori
estimates obtained, we can prove the existence of solutions to
prescribed Weingarten curvature equations of type (\ref{main
equation}).

\section{Some useful formulae} \label{S2}

Except the setting of notations in Section \ref{S1}, denote by
$\bar\nabla$, $\nabla$ the Riemannian connections on $\bar{M}$ and
$\mathcal{G}$, respectively. The curvature tensors in $\bar{M}$ and
$\mathcal{G}$ will be denoted by $\bar{R}$ and $R$, respectively.
Let $\{E_{0}=\nu,E_{1},\cdots,E_{n}\}$ be an orthonormal frame field
in
 $\mathcal{G}$ and
$\{\omega_{0},\omega_{1},\cdots,\omega_{n}\}$ is its associated dual
frame field. The connections forms $\{\omega_{ij}\}$ and curvature
forms $\{\Omega_{ij}\}$ in $\mathcal{G}$ satisfy the structure
equations
\begin{eqnarray*}
d\omega_{i}-\sum\limits_{i}\omega_{ij}\wedge \omega_{j}=0,\quad \omega_{ij}+\omega_{ji}=0,
\end{eqnarray*}
\begin{eqnarray*}
d\omega_{ij}-\sum\limits_{k}\omega_{ik}\wedge \omega_{kj}=\Omega_{ij}=-\frac{1}{2}\sum\limits_{k,l}R_{ijkl}\omega_{k}\wedge \omega_{l}.
\end{eqnarray*}
The coefficients $h_{ij}$, $1\leq i,j\leq n$, of the second
fundamental form are given by Weingarten equation
\begin{eqnarray}\label{Weingarten-eq}
\omega_{i0}=\sum\limits_{j}h_{ij}\omega_{j}.
\end{eqnarray}
The covariant derivatives of the second fundamental form $h_{ij}$ in
$\mathcal{G}$ are given by
\begin{eqnarray*}
\sum\limits_{k}h_{ijk}\omega_{k}=dh_{ij}+\sum\limits_{l}h_{il}\omega_{lj}+
\sum\limits_{l}h_{lj}\omega_{li},
\end{eqnarray*}
\begin{eqnarray*}
\sum\limits_{l}h_{ijkl}\omega_{l}=dh_{ijk}+\sum\limits_{l}h_{ljk}\omega_{li}+
\sum\limits_{l}h_{ilk}\omega_{lj}+\sum\limits_{l}h_{ljl}\omega_{lk}.
\end{eqnarray*}
The Codazzi equation is
\begin{eqnarray}\label{codazzi-eq}
h_{ijk}-h_{ikj}=-\bar{R}_{0ijk},
\end{eqnarray}
and the Ricci identity can be obtained as follows:

\begin{lemma} (see also \cite[Lemma 2.2]{clw})
Let $X(x)$ be a point of $\mathcal{G}$ and
$\{E_{0}=\nu,E_{1},\cdots,E_{n}\}$ be an adapted frame field such
that each $E_{i}$ is a principal direction and $\omega^{k}_{i}=0$ at
$X(x)$. Let $(h_{ij})$ be the second quadratic form of
$\mathcal{G}$. Then, at the point $X(x)$, we have
\begin{eqnarray}\label{ricci-eq}
\begin{split}
h_{llii}=&h_{iill}-h_{lm}(h_{mi}h_{il}-h_{ml}h_{ii})-h_{mi}(h_{mi}h_{ll}-h_{ml}h_{li})\\
&+\bar{R}_{0iil;l}-2h_{ml}\bar{R}_{miil}+h_{il}\bar{R}_{0i0l}+h_{ll}\bar{R}_{0ii0}\\
&+\bar{R}_{0lil;i}-2h_{mi}\bar{R}_{mlil}+h_{ii}\bar{R}_{0l0l}+h_{li}\bar{R}_{0li0}.
\end{split}
\end{eqnarray}
\end{lemma}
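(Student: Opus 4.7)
The plan is to derive the formula by bridging $h_{iill}$ and $h_{llii}$ through two applications of the Codazzi equation (\ref{codazzi-eq}) combined with one commutation of covariant derivatives via the intrinsic Ricci identity, then converting intrinsic curvature to ambient curvature through the Gauss equation. Throughout I work at the prescribed point $X(x)$ where the frame is chosen so that $(h_{ij})$ is diagonal and $\omega^k_i = 0$, so Christoffel symbols drop out of one-step covariant derivative expressions and all index juggling happens via curvature tensors and products of $h$.

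\textbf{Step 1 (Codazzi transport of the differentiation indices).} Starting from $h_{iill}$, I would apply Codazzi once to swap the third and fourth indices after one more differentiation: at the point in question, $h_{iil} = h_{ili} - \bar{R}_{0iil}$. Differentiating again and then applying Codazzi a second time (with one further derivative added) converts $h_{iill}$ into $h_{ilil}$ plus a term $\bar{R}_{0iil;l}$ arising from the covariant differentiation of the Codazzi correction. An analogous pair of swaps on the other side expresses $h_{llii}$ in terms of $h_{lili}$ plus the term $\bar{R}_{0lil;i}$. These are the two $\bar R_{0\cdots;\cdot}$ terms in the asserted formula.

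\textbf{Step 2 (Intrinsic Ricci identity and Gauss reduction).} The two fourth-derivative quantities $h_{ilil}$ and $h_{lili}$ differ by a commutator of second derivatives applied to the $(0,2)$-tensor $h$, which by the standard Ricci identity on $\mathcal{G}$ produces terms of the schematic form $h_{mj}R_{mikl} + h_{im}R_{mjkl}$. I would then substitute the Gauss equation $R_{ijkl} = \bar R_{ijkl} + h_{ik}h_{jl} - h_{il}h_{jk}$ to rewrite each intrinsic $R_{m\cdot\cdot\cdot}$ as $\bar R_{m\cdot\cdot\cdot}$ plus a product of two $h$'s. The cubic $h$-contributions assemble into exactly the two grouped expressions $h_{lm}(h_{mi}h_{il}-h_{ml}h_{ii})$ and $h_{mi}(h_{mi}h_{ll}-h_{ml}h_{li})$, while the $\bar R$-contributions provide the coefficients of $-2h_{ml}\bar R_{miil}$ and $-2h_{mi}\bar R_{mlil}$ (the factor $2$ coming from the two occurrences of $h$ in each Ricci-identity term) together with the $h_{il}\bar R_{0i0l}+h_{ll}\bar R_{0ii0}+h_{ii}\bar R_{0l0l}+h_{li}\bar R_{0li0}$ contributions that arise when indices $0$ appear through the frame choice $E_0 = \nu$.

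\textbf{Main obstacle and clean-up.} The genuine difficulty is not in any single identity but in the bookkeeping: keeping track of the sign conventions in (\ref{codazzi-eq}), of which pair of indices is being antisymmetrized at each commutation, and of which components of the Riemann tensor carry a normal index $0$ after Gauss reduction. I expect the hardest step to be verifying that the two symmetric-looking lines of the asserted identity really do correspond, term-by-term, to the two Codazzi swaps (one producing the $\bar R_{0iil;l}$-line, the other the $\bar R_{0lil;i}$-line), and that the cubic $h$-terms combine with the correct signs once the diagonality $h_{ij} = \lambda_i \delta_{ij}$ is \emph{not} yet imposed (the formula is stated for general $h_{ij}$ even though the frame is principal, which is important for later use). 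Once indices are carefully aligned, the formula follows by direct substitution.
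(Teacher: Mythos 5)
Your overall route --- Codazzi to transport the differentiation indices, the intrinsic Ricci identity to commute the two remaining derivatives, and the Gauss equation to convert intrinsic to ambient curvature plus cubic terms in $h$ --- is the standard derivation of this Simons-type identity and is essentially how the cited source (\cite[Lemma 2.2]{clw}) obtains it; the paper itself gives no proof and simply quotes that reference. Steps 1 and 2 are therefore the right skeleton, and the cubic $h$-groupings together with one copy of each $h_{m\cdot}\bar R_{m\cdot\cdot\cdot}$ term do come out of the Ricci-plus-Gauss commutation as you describe.

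There is, however, one concrete gap: you attribute the four terms $h_{il}\bar R_{0i0l}+h_{ll}\bar R_{0ii0}+h_{ii}\bar R_{0l0l}+h_{li}\bar R_{0li0}$ (and, implicitly, the second copy of each $h\bar R_{m\cdot\cdot\cdot}$ term needed to produce the coefficient $-2$) to the Gauss reduction in Step 2. The Gauss equation only relates the intrinsic curvature of $\mathcal{G}$ to ambient curvature components with all four indices tangential, so it can never generate a normal index $0$; Step 2 as written accounts for only one of the two copies of $h_{ml}\bar R_{miil}$ and for none of the $\bar R_{0\cdot 0\cdot}$ terms. These actually arise in Step 1, where you silently identify the tangential derivative of the scalar component $\bar R(\nu,E_i,E_j,E_k)$ with the ambient covariant-derivative component $\bar R_{0ijk;l}$. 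The two differ: expanding $\nabla_{E_l}\bigl(\bar R(\nu,E_i,E_j,E_k)\bigr)$ using the Weingarten relation $\bar\nabla_{E_l}\nu=\sum_m h_{lm}E_m$ and the Gauss formula $\bar\nabla_{E_l}E_j=\nabla_{E_l}E_j-h_{lj}\nu$ yields $\bar R_{0ijk;l}$ plus $\sum_m h_{lm}\bar R_{mijk}$ plus exactly the terms in which one tangential slot is replaced by $\nu$, i.e.\ the $h\,\bar R_{0\cdot 0\cdot}$ contributions (one of which vanishes via $\bar R_{00jk}=0$). Inserting this frame-differentiation step into your Step 1 closes the gap; the remainder is the sign and index bookkeeping you already flag as the main practical difficulty.
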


As mentioned in Section \ref{S1}, one can suitably choose local
coordinates such that $\{e_{i}\}_{i=1,2,\cdots,n}$ is an orthonormal
frame field in $M^{n}$, and then one can find an orthonormal frame
field $\{\bar{e}_{\alpha}\}_{\alpha=0,1,\cdots,n}$ in $\bar{M}$ such
that $\bar{e}_{i}=(1/f)e_{i}$, $1\leq\alpha=i\leq n$ and
$\bar{e}_{0}=\partial/\partial t$. Correspondingly, the associated
dual frame field of $\{\bar{e}_{\alpha}\}_{\alpha=0,1,\cdots,n}$
should be $\{\theta_{\alpha}\}_{\alpha=0,1,\cdots,n}$ with
$\bar{\theta}_{i}=f\theta_{i}$, $1\leq i \leq n$, and
$\bar{\theta}_{0}=dt$. Clearly, $\{\theta_{i}\}_{i=1,\cdots,n}$ is
the dual frame field of the orthonormal frame field
$\{e_{i}\}_{i=1,2,\cdots,n}$. We have the following fact:

\begin{lemma} (see \cite{clw})
On the leaf $M_{t}$ of the warped product manifold
$\bar{M}=I\times_{f}M^{n}$, the curvature satisfies
\begin{eqnarray}\label{R_ijk0}
\bar{R}_{ijk0}=0
\end{eqnarray}
and the principal curvature is given by
\begin{eqnarray}\label{kappa}
\kappa(t)=\frac{f'(t)}{f(t)}
\end{eqnarray}
where the outward unit normal vector
$\bar{e}_{0}=\frac{\partial}{\partial t}$ is chosen for each leaf
$M_{t}$.
\end{lemma}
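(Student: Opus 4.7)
The plan is to read off both statements from the observation that each leaf $M_{t}$ of $\bar{M}=I\times_{f}M^{n}$ is totally umbilical with an umbilicity factor depending only on $t$.

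First I would establish the basic warped-product identity $\bar{\nabla}_{X}\bar{e}_{0}=(f'/f)X$ for every $X$ tangent to a leaf $M_{t}$. This comes directly from the Koszul formula: if $Y$ is the horizontal lift of a field on $M^{n}$ then $[\partial_{t},Y]=0$, and differentiating $\bar{g}(Y,Y)=f^{2}g(Y,Y)$ in $t$ gives $\bar{g}(\bar{\nabla}_{\partial_{t}}Y,Y)=ff'g(Y,Y)$, whence $\bar{\nabla}_{\partial_{t}}Y=(f'/f)Y$. Torsion-freeness then yields $\bar{\nabla}_{Y}\partial_{t}=(f'/f)Y$, and normalising $Y=fe_{i}$ gives the claim for $X=\bar{e}_{i}$. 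Plugging this into the paper's definition of the second fundamental form for $M_{t}$ with outward unit normal $\bar{e}_{0}$,
\[
h_{ij}=-\langle\bar{\nabla}_{\bar{e}_{j}}\bar{e}_{i},\bar{e}_{0}\rangle =\langle\bar{e}_{i},\bar{\nabla}_{\bar{e}_{j}}\bar{e}_{0}\rangle=\frac{f'(t)}{f(t)}\delta_{ij},
\]
so every principal curvature of $M_{t}$ equals $f'(t)/f(t)$, which is exactly (\ref{kappa}).

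For (\ref{R_ijk0}) I would invoke the Codazzi identity (\ref{codazzi-eq}) applied to the hypersurface $M_{t}\subset\bar{M}$, namely $h_{ijk}-h_{ikj}=-\bar{R}_{0ijk}$. The previous step gives $h_{ij}=\kappa(t)\delta_{ij}$ on $M_{t}$ with $\kappa(t)$ constant along the leaf, and since $\nabla^{M_{t}}\bar{g}=0$ it follows that $\nabla^{M_{t}}h=(d\kappa)|_{M_{t}}\otimes \bar{g}|_{M_{t}}=0$; in particular $h_{ijk}\equiv 0$ on $M_{t}$. Hence $\bar{R}_{0ijk}=0$, and the standard curvature symmetries $\bar{R}_{abcd}=\bar{R}_{cdab}=-\bar{R}_{abdc}$ force $\bar{R}_{ijk0}=-\bar{R}_{0kij}=0$.

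The only real obstacle is a bookkeeping one: matching the paper's sign conventions for $h_{ij}$ and for the Codazzi equation with the totally-umbilical computation above, and being careful that the rescaling $\bar{e}_{i}=e_{i}/f$ leaves $\{\bar{e}_{i}\}$ orthonormal for the induced metric on $M_{t}$. Once that is pinned down no analysis is required: both assertions are direct structural consequences of the warped-product metric.
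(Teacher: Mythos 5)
Your proof is correct. The paper itself offers no argument for this lemma --- it is simply quoted from \cite{clw} --- so there is nothing in the text to compare against; your write-up supplies the standard self-contained justification: the leaves of a warped product are totally umbilic with shape operator $(f'/f)\,\mathrm{id}$ (which gives \eqref{kappa}), and then Codazzi applied to a leaf, where $h_{ijk}\equiv 0$, kills $\bar R_{0ijk}$ and hence, by the pair and antisymmetry identities, $\bar R_{ijk0}$ (which gives \eqref{R_ijk0}). Two small points worth tightening if you write this out in full: the step from $\bar g(\bar\nabla_{\partial_t}Y,Y)=ff'g(Y,Y)$ to $\bar\nabla_{\partial_t}Y=(f'/f)Y$ needs polarization over all lifts $Z$ plus the observation that the $\partial_t$-component vanishes because $\bar\nabla_{\partial_t}\partial_t=0$; and when you quote $h_{ij}=(f'/f)\delta_{ij}$ you should note this is the shape operator in the orthonormal frame $\{\bar e_i\}$, consistent with formula \eqref{h_{ij}} for $u=\mathrm{const}$ after raising an index with $\widetilde g^{ij}=f^{-2}\delta^{ij}$. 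Neither is a gap, just bookkeeping you already flagged.
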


\begin{remark}
\rm{ In fact, the leaf $M_{t}$ can also be seen as a closed graphic
hypersurface in $\bar{M}$, which corresponds to the graph of some
constant function, i.e. $u=const.$. Besides, we refer readers to
\cite[Section 2]{clw} or \cite{PP} for the geometry of hypersurfaces
in warped product manifolds if necessary.
 }
\end{remark}

Consider two functions $\tau :\mathcal{G}\rightarrow \mathbb{R}$ and
$\Lambda :\mathcal{G}\rightarrow \mathbb{R}$ given by
\begin{eqnarray}\label{f-1}
\tau=f\langle\nu,\bar{e}_{0}\rangle=\langle V,\nu\rangle,\qquad
\Lambda=\int_{0}^{u} f(s) ds,
\end{eqnarray}
where $V=f\bar{e}_{0}=f\frac{\partial}{\partial t}$ is the position
vector field and $\nu$ is the outward unit normal vector field. Then
we have:

\begin{lemma}  \label{f-2} (see \cite{ajb})
The gradient vector fields of the functions $\tau$ and $\Lambda$ are
\vspace{0.25cm}
\begin{eqnarray}\label{g-la}
\nabla_{E_{i}}\Lambda=f\left<\bar{e}_{0},E_{i}\right>,
\end{eqnarray}
\begin{eqnarray}\label{g-ta}
\nabla_{E_{i}}\tau=\sum\limits_{j}\nabla_{E_{j}}\Lambda h_{ij},
\end{eqnarray}
and the second order derivatives of $\tau$ and $\Lambda$ are given
by
\begin{eqnarray}\label{d2-la}
\nabla^{2}_{E_{i},E_{j}}\Lambda=-\tau h_{ij}+f'g_{ij},
\end{eqnarray}
\begin{eqnarray}\label{d2-ta}
\nabla^{2}_{E_{i},E_{j}}\tau=-\tau\sum\limits_{k}h_{ik}h_{kj}+f'h_{ij}+\sum\limits_{k}(h_{ijk}+\bar{R}_{0ijk})\nabla_{E_{k}}\Lambda.
\end{eqnarray}
\end{lemma}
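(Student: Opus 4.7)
The plan is to regard $\Lambda$ as the restriction to $\mathcal{G}$ of the ambient function $\bar\Lambda(t,x):=\int_{0}^{t}f(s)\,ds$ on $\bar{M}$, whose ambient gradient is precisely the position field $V=f\partial_{t}$. The engine driving every identity is the warped-product ``radial-conformal'' identity
\[
\bar\nabla_{X}V=f'(t)\,X\qquad\forall\,X\in T\bar{M},
\]
which is a routine consequence of the warped-product Koszul formulas $\bar\nabla_{\partial_{t}}\partial_{t}=0$ and $\bar\nabla_{\bar{e}_{i}}\partial_{t}=(f'/f)\bar{e}_{i}$ applied to $V=f\bar{e}_{0}$.

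With this in hand, (\ref{g-la}) is immediate: the intrinsic gradient on $\mathcal{G}$ is the tangential projection $\nabla\Lambda=V-\tau\nu$, so $\nabla_{E_{i}}\Lambda=\langle V,E_{i}\rangle=f\langle\bar{e}_{0},E_{i}\rangle$. For (\ref{g-ta}) I would differentiate $\tau=\langle V,\nu\rangle$ along $E_{i}$, split into $\langle\bar\nabla_{E_{i}}V,\nu\rangle+\langle V,\bar\nabla_{E_{i}}\nu\rangle$, kill the first term by $\bar\nabla_{E_{i}}V=f'E_{i}\perp\nu$, and use the Weingarten equation (\ref{Weingarten-eq}) in its equivalent form $\bar\nabla_{E_{i}}\nu=\sum_{k}h_{ik}E_{k}$ together with (\ref{g-la}) on the second.

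For (\ref{d2-la}) I would use the standard Gauss-type relation between intrinsic and extrinsic Hessians applied to $\bar\Lambda$:
\[
\nabla^{2}_{E_{i},E_{j}}\Lambda=\bar\nabla^{2}_{E_{i},E_{j}}\bar\Lambda-h_{ij}\,\langle\bar\nabla\bar\Lambda,\nu\rangle.
\]
The radial-conformal identity gives $\bar\nabla^{2}\bar\Lambda(X,Y)=f'\langle X,Y\rangle$ and $\langle\bar\nabla\bar\Lambda,\nu\rangle=\tau$, producing (\ref{d2-la}) directly. For (\ref{d2-ta}) I would covariantly differentiate (\ref{g-ta}) to get
\[
\nabla^{2}_{E_{i},E_{j}}\tau=\sum_{k}h_{ikj}\,\nabla_{E_{k}}\Lambda+\sum_{k}h_{ik}\,\nabla^{2}_{E_{k},E_{j}}\Lambda,
\]
substitute (\ref{d2-la}) into the second sum to produce $-\tau\sum_{k}h_{ik}h_{kj}+f'h_{ij}$, and rewrite $h_{ikj}=h_{ijk}+\bar{R}_{0ijk}$ via Codazzi (\ref{codazzi-eq}) to convert the first sum into $\sum_{k}(h_{ijk}+\bar{R}_{0ijk})\nabla_{E_{k}}\Lambda$.

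The main obstacle is bookkeeping rather than conceptual: the sign conventions embodied in the definition of $h_{ij}$, in the Weingarten equation (\ref{Weingarten-eq}), and in the Codazzi equation (\ref{codazzi-eq}) must all be respected simultaneously, and a single sign slip flips one of the four identities. Routing the proof of (\ref{d2-la}) through the invariant Gauss--Weingarten Hessian formula, rather than differentiating (\ref{g-la}) component-wise, is what cleanly separates the $f'g_{ij}$ and $-\tau h_{ij}$ contributions; once (\ref{d2-la}) is in place, (\ref{d2-ta}) reduces to algebraic manipulation plus one use of Codazzi.
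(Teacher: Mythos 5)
Your proof is correct. The paper itself offers no argument for this lemma---it is quoted from Andrade--Barbosa--de Lira \cite{ajb}---and your derivation via the closed-conformal-field identity $\bar\nabla_{X}V=f'X$ (hence $\bar\nabla\bar\Lambda=V$ and $\bar\nabla^{2}\bar\Lambda=f'\bar g$), followed by tangential projection, the Weingarten relation, and one application of Codazzi, is exactly the standard route taken in that reference; the signs you obtain are consistent with the paper's convention $h_{ij}=-\langle\bar\nabla_{X_j}X_i,\nu\rangle$ for the outward normal (as a sanity check, \eqref{d2-la} correctly vanishes on a leaf $M_t$, where $\tau=f$ and $h_{ij}=(f'/f)\delta_{ij}$).
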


The following Newton-Maclaurin inequality will be used frequently
(see, e.g., \cite{mt1,t2}).

\begin{lemma}\label{NM-ieq}
Let $\lambda \in \mathbb{R}^{n}$. For $0\leq l\leq k\leq n,~~r>s\geq
0,~~k\geq r,~~l\geq s$, we have
\begin{eqnarray*}
k(n-l+1)\sigma_{l-1}(\lambda)\sigma_{k}(\lambda)\leq l(n-k+1)\sigma_{l}(\lambda)\sigma_{k-1}
\end{eqnarray*}
and
\begin{eqnarray*}
\left[\frac{\sigma_{k}(\lambda)/C_{n}^{k}}{\sigma_{l}(\lambda)/C_{n}^{l}}\right]^{\frac{1}{k-l}}
\leq
\left[\frac{\sigma_{r}(\lambda)/C_{n}^{r}}{\sigma_{s}(\lambda)/C_{n}^{s}}\right]^{\frac{1}{r-s}},
\qquad for~~ \lambda\in\Gamma_{k}.
\end{eqnarray*}
\end{lemma}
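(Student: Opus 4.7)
The plan is to derive both inequalities from the single classical fact that the \emph{normalized} elementary symmetric polynomials
\[
p_j(\lambda) := \sigma_j(\lambda)/C_n^j, \qquad 0 \le j \le n,
\]
satisfy Newton's inequality $p_{j-1}p_{j+1} \leq p_j^2$ for every $\lambda \in \mathbb{R}^n$. Using the elementary identity $C_n^{j}/C_n^{j-1} = (n-j+1)/j$, one checks by direct cross-multiplication that the first asserted inequality
\[
k(n-l+1)\,\sigma_{l-1}\sigma_k \leq l(n-k+1)\,\sigma_l\sigma_{k-1}
\]
is equivalent to the ``telescoped'' log-concavity statement $p_{l-1}p_k \leq p_l p_{k-1}$. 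Assuming the admissibility $\lambda \in \Gamma_k$ (which the application will always supply, since it guarantees $p_0,\dots,p_k>0$ so that the chain below is sign-preserving), this follows from Newton by iteration:
\[
\frac{p_{l-1}}{p_l} \leq \frac{p_l}{p_{l+1}} \leq \cdots \leq \frac{p_{k-1}}{p_k},
\]
each inequality being a single instance of $p_{j-1}p_{j+1}\leq p_j^2$, and cross-multiplication then yields the claim.

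Newton's inequality itself I would establish by the classical polynomial argument: form $P(t) = \prod_{i=1}^{n}(t+\lambda_i)$, whose coefficients (up to sign) are the $\sigma_j(\lambda)$. Since all roots of $P$ are real, Rolle's theorem implies that every derivative $P^{(m)}$ is also real-rooted, and the same holds for the ``reversal'' $t^{\deg Q}Q(1/t)$ of any real-rooted polynomial $Q$. Differentiating $n-j-1$ times and then reversing and differentiating $j-1$ times produces a quadratic whose coefficients are multiples of $\sigma_{j-1}, \sigma_j, \sigma_{j+1}$; the non-negativity of its discriminant is exactly Newton's inequality, and the base case $n=2$ reduces to $(\lambda_1 - \lambda_2)^2 \geq 0$.

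For the second inequality, on $\Gamma_k$ the sequence $a_j := \log p_j$ is well defined for $0 \leq j \leq k$, and Newton's inequality becomes the concavity statement $a_{j+1}-a_j \leq a_j - a_{j-1}$. The required estimate
\[
\frac{a_k - a_l}{k-l} \leq \frac{a_r - a_s}{r-s}
\]
is then the standard slope-decreasing property of concave sequences: under the hypotheses $s \leq l$, $r \leq k$, $s<r$, $l<k$, every chord on the ``later'' interval $[l,k]$ has slope no larger than every chord on the ``earlier'' interval $[s,r]$. I would split into the two natural subcases $l \leq r$ and $r \leq l$, inserting one intermediate chord in each (e.g., comparing chord$(l,k)$ with chord$(s,k)$, then with chord$(s,r)$ using the fact that under concavity, restricting or extending an endpoint can only decrease or increase the slope monotonically). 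Exponentiating recovers the stated form. The truly nontrivial content is Newton's inequality; I expect the main bookkeeping obstacle to be the asymmetry between the constraints on $(l,k)$ and $(s,r)$, which requires that I verify the chord comparison without assuming either interval contains the other.
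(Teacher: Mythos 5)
Your proposal is correct. The paper itself gives no proof of this lemma --- it simply cites Lin--Trudinger \cite{mt1} and Trudinger \cite{t2} --- so your self-contained derivation from Newton's inequality $p_{j-1}p_{j+1}\le p_j^2$ for the normalized functions $p_j=\sigma_j/C_n^j$ is a genuine addition rather than a variant of the paper's argument; it is the standard argument underlying those references, and the Rolle/reversal proof of Newton's inequality together with the cross-multiplication identity $C_n^{l}C_n^{k-1}/(C_n^{l-1}C_n^{k})=k(n-l+1)/\bigl(l(n-k+1)\bigr)$ both check out. Two remarks. First, you are right to insert the admissibility hypothesis before telescoping $p_{l-1}/p_l\le\cdots\le p_{k-1}/p_k$: the telescoped inequality $p_{l-1}p_k\le p_l p_{k-1}$ is genuinely false for arbitrary $\lambda\in\mathbb{R}^n$ once $k-l\ge 2$ (e.g.\ $n=3$, $\lambda=(1,-1,-1)$, $l=1$, $k=3$ gives $9\le 1$), so the paper's blanket ``$\lambda\in\mathbb{R}^n$'' in the first inequality is an overstatement that your caveat silently repairs; only the consecutive case $k=l+1$, i.e.\ Newton's inequality itself, holds unconditionally. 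Second, the chord comparison for the concave sequence $a_j=\log p_j$ needs no case split on the relative position of $l$ and $r$: since $s\le l<k$ and $s<r\le k$, the single intermediate chord over $[s,k]$ suffices, because the slope $(a_y-a_x)/(y-x)$ is nonincreasing separately in $x$ and in $y$.
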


At end, we also need the following truth to ensure the ellipticity
of the Eq. \eqref{equation 1.1}.

\begin{lemma}\label{ellip-}
Let $\mathcal{G}=\{\left( u(x),x\right)|x\in M^{n}\}$ be a smooth
$(k-1)$-admissible closed hypersurface in $\bar{M}$ and
$\alpha_{l}(u,x)\geq 0$ for any $x\in M^{n}$ and $0\leq l\leq k-2$.
Then the operator
\begin{eqnarray*}
G\left(h_{ij}(V),u,x\right):=\frac{\sigma_{k}(\lambda(V))}{\sigma_{k-1}(\lambda(V))}
-\sum\limits_{l=0}^{k-2}\alpha_{l}(u,x)\frac{\sigma_{l}(\lambda(V))}{\sigma_{k-1}(\lambda(V))}
\end{eqnarray*}
is elliptic and concave with respect to $h_{ij}(V)$.
\end{lemma}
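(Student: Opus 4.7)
The plan is to work with $G$ as a symmetric function of the principal curvatures $\lambda = (\lambda_{1}, \ldots, \lambda_{n})$ rather than directly of the matrix $h_{ij}$. By Davis's theorem on spectral functions, a symmetric function $\phi$ on the open convex symmetric cone $\Gamma_{k-1}$ is concave if and only if the induced matrix function $A \mapsto \phi(\lambda(A))$ is concave on the $(k-1)$-admissible matrices. Moreover, at a diagonal $h$ the matrix $\partial G/\partial h_{ij}$ is diagonal with entries $\partial G/\partial \lambda_i$, so its positive-definiteness is equivalent to $\partial G/\partial \lambda_i > 0$ for each $i$. Thus it suffices to prove that $G(\lambda) = \sigma_{k}/\sigma_{k-1} - \sum_{l=0}^{k-2} \alpha_l \sigma_{l}/\sigma_{k-1}$ is concave and has positive partial derivatives on $\Gamma_{k-1}$.

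For concavity, the leading quotient $\sigma_{k}/\sigma_{k-1}$ is classically concave on $\Gamma_{k-1}$. For each $0 \leq l \leq k-2$, the standard Brunn--Minkowski-type inequality for elementary symmetric polynomials gives that $(\sigma_{k-1}/\sigma_{l})^{1/(k-1-l)}$ is concave and positive on $\Gamma_{k-1}$; its reciprocal $(\sigma_{l}/\sigma_{k-1})^{1/(k-1-l)}$ is therefore convex and positive, and composing with the convex nondecreasing map $t \mapsto t^{k-1-l}$ (allowed because $k-1-l \geq 1$) shows that $\sigma_{l}/\sigma_{k-1}$ itself is convex. Since $\alpha_l \geq 0$, each $-\alpha_l \sigma_{l}/\sigma_{k-1}$ is concave, and the finite sum $G$ is concave.

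For ellipticity, positivity of $\partial(\sigma_{k}/\sigma_{k-1})/\partial \lambda_i$ on $\Gamma_{k-1}$ is standard. For the subtracted terms, direct differentiation gives
\[
\frac{\partial}{\partial \lambda_i}\frac{\sigma_{l}(\lambda)}{\sigma_{k-1}(\lambda)} = \frac{\sigma_{l-1}(\lambda|i)\sigma_{k-1}(\lambda) - \sigma_{l}(\lambda)\sigma_{k-2}(\lambda|i)}{\sigma_{k-1}^{2}(\lambda)},
\]
where $\lambda|i \in \mathbb{R}^{n-1}$ denotes $\lambda$ with its $i$-th coordinate removed. Expanding $\sigma_{m}(\lambda) = \sigma_{m}(\lambda|i) + \lambda_i \sigma_{m-1}(\lambda|i)$ in both $\sigma_{l}(\lambda)$ and $\sigma_{k-1}(\lambda)$ cancels the cross terms and collapses the numerator to $\sigma_{l-1}(\mu)\sigma_{k-1}(\mu) - \sigma_{l}(\mu)\sigma_{k-2}(\mu)$ with $\mu := \lambda|i$. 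Proving this is $\leq 0$, combined with $-\alpha_l \leq 0$, then delivers $\partial G/\partial \lambda_i > 0$.

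The main obstacle is that $\lambda \in \Gamma_{k-1}$ in $\mathbb{R}^n$ only guarantees $\sigma_{j}(\mu) > 0$ for $j \leq k-2$ (using $\sigma_{j-1}(\mu) = \partial_{\lambda_i}\sigma_{j}(\lambda) > 0$ on $\Gamma_{k-1}$ for $j \leq k-1$), which is strictly weaker than $\mu$ lying in the $(k-1)$-positive cone of $\mathbb{R}^{n-1}$ required for a direct application of Lemma \ref{NM-ieq}. I would resolve this by a dichotomy: if $\sigma_{k-1}(\mu) \leq 0$, the desired inequality is immediate because the left side is then nonpositive while $\sigma_{l}(\mu)$ and $\sigma_{k-2}(\mu)$ are strictly positive on the $(k-2)$-positive cone; otherwise $\sigma_{k-1}(\mu) > 0$ automatically places $\mu$ in the $(k-1)$-positive cone of $\mathbb{R}^{n-1}$, and the first inequality in Lemma \ref{NM-ieq} applied there with indices $(k-1, l)$ yields $(k-1)(n-l)\sigma_{l-1}\sigma_{k-1} \leq l(n-k+1)\sigma_{l}\sigma_{k-2}$, from which the bound follows since $(k-1)(n-l) \geq l(n-k+1)$ whenever $l \leq k-1$.
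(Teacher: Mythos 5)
Your argument is correct and is essentially the proof the paper intends: the paper omits the proof entirely, deferring to \cite[Proposition 2.2]{gz}, and your decomposition (concavity of $\sigma_{k}/\sigma_{k-1}$ on $\Gamma_{k-1}$, convexity of $\sigma_{l}/\sigma_{k-1}$ obtained from the concave positive root $(\sigma_{k-1}/\sigma_{l})^{1/(k-1-l)}$, and the Newton--Maclaurin sign analysis of $\partial_{\lambda_{i}}(\sigma_{l}/\sigma_{k-1})$ after restricting to $\mu=\lambda|i$) is precisely a reconstruction of that argument, with your dichotomy on the sign of $\sigma_{k-1}(\mu)$ correctly handling the fact that $\mu$ is a priori only in $\Gamma_{k-2}$ of $\mathbb{R}^{n-1}$. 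The one ingredient you call ``classical'' that deserves a flag is the concavity (and strict ellipticity) of $\sigma_{k}/\sigma_{k-1}$ on the larger cone $\Gamma_{k-1}$ rather than $\Gamma_{k}$: this is exactly the nonstandard point proved in \cite{gz}, and it is also invoked without proof in the paper's $C^{0}$ estimate, so you should cite it rather than treat it as folklore.
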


\begin{proof}
The proof is almost the same with the one of \cite[Proposition
2.2]{gz}, and we prefer to omit here.
\end{proof}

\section{$C^0$ estimate}  \label{S3}

We consider the family of equations for $0\leq t\leq 1$,
\begin{eqnarray}\label{equation 1.1}
\frac{\sigma_{k}(\lambda(V))}{\sigma_{k-1}(\lambda(V))}-\sum\limits_{l=0}^{k-2}t\alpha_{l}(u,x)\frac{\sigma_{l}(\lambda(V))}{\sigma_{k-1}(\lambda(V))}-\alpha_{k-1}(u,x,t)=0,
\end{eqnarray}
where
\begin{eqnarray*}
\alpha_{k-1}(u,x,t):=t\alpha_{k-1}(u,x)+(1-t)\varphi(u)\frac{\sigma_{k}(e)}{\sigma_{k-1}(e)}\frac{f'}{f},
\end{eqnarray*}
and $\varphi$ is a positive function defined on $I$ and satisfying
the following conditions:

(a) $\varphi(u)>0$;

(b) $\varphi(u)>1$ for $u \leq r_{1}$;

(c) $\varphi(u)<1$ for $u \geq r_{2}$;

(d) $\varphi'(u)<0$.

\begin{lemma}[\textbf{$C^{0}$ estimate}]\label{C0 estimate}
Assume that $0\leq \alpha_{l}(u,x)\in C^{\infty}(I\times M^{n}).$
Under the assumptions \eqref{as-1} and \eqref{as-2} mentioned in
Theorem \ref{maintheorem}, if $\mathcal{G}=\{(u(x),x)|x\in
M^{n}\}\subset \bar{M}$ is a smooth $(k-1)$-admissible, closed
graphic hypersurface satisfied the curvature equation
\eqref{equation 1.1} for a given $t\in[0,1]$, then
\begin{eqnarray*}
r_{1}\leq u(x) \leq r_{2},\qquad \forall x\in M^{n}.
\end{eqnarray*}
\end{lemma}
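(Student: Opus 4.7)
The plan is a two-sided maximum principle argument: I would establish $u\le r_2$ by deriving a contradiction at a would-be maximum above $r_2$, and then $u\ge r_1$ by the dual argument at a minimum below $r_1$. I sketch only the upper bound.

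Suppose for contradiction that $u_{\max} := \max_{M^n} u > r_2$ is attained at some $x_0\in M^n$. Then $Du(x_0)=0$ and $(u_{ij}(x_0))$ is negative semi-definite. Substituting $Du(x_0)=0$ into \eqref{h_{ij}} gives $h_{ij}(x_0) = -u_{ij}(x_0)+f(u_{\max})f'(u_{\max})\delta_{ij}$, while the induced metric reduces to $\widetilde{g}_{ij}(x_0) = f(u_{\max})^2\delta_{ij}$; diagonalizing jointly, the principal curvatures at $x_0$ satisfy $\lambda_i(x_0)\ge \kappa_0 := f'(u_{\max})/f(u_{\max})$ for every $i$, and in particular $\lambda(x_0)\in\Gamma_n$.

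Next I would exploit two monotonicity facts on the positive cone. Writing $F(\lambda):=\sigma_k(\lambda)/\sigma_{k-1}(\lambda)$ and $G_l(\lambda):=\sigma_l(\lambda)/\sigma_{k-1}(\lambda)$ for $0\le l\le k-2$, a direct computation using the splitting $\sigma_j(\lambda)=\sigma_j(\lambda|i)+\lambda_i\sigma_{j-1}(\lambda|i)$ together with the Newton--Maclaurin inequality in Lemma~\ref{NM-ieq} applied on the remaining $n-1$ entries shows $\partial_{\lambda_i}F\ge 0$ and $\partial_{\lambda_i}G_l\le 0$. Evaluated at $\lambda(x_0)\ge \kappa_0 e$ componentwise, this yields $F(\lambda(x_0))\ge F(\kappa_0 e)=\kappa_0\sigma_k(e)/\sigma_{k-1}(e)$ and $G_l(\lambda(x_0))\le G_l(\kappa_0 e)=\kappa_0^{l-k+1}\sigma_l(e)/\sigma_{k-1}(e)$. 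Dividing the hypothesis \eqref{as-1} (valid at $u_{\max}\ge r_2$) through by $\sigma_{k-1}(e)\kappa_0^{k-1}$ supplies the companion ``barrier'' inequality
\[
\alpha_{k-1}(u_{\max},x_0)+\sum_{l=0}^{k-2}\alpha_l(u_{\max},x_0)\,G_l(\kappa_0 e)\;\le\; F(\kappa_0 e).
\]

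Reading the parametrized equation \eqref{equation 1.1} at $x_0$ and combining the above with $\alpha_l\ge 0$ and property (c) $\varphi(u_{\max})<1$ gives
\[
\text{RHS of \eqref{equation 1.1}}\;\le\;\bigl[t+(1-t)\varphi(u_{\max})\bigr]\,F(\kappa_0 e)\;\le\;F(\lambda(x_0))\;=\;\text{LHS},
\]
with strict inequality for every $t\in[0,1)$, producing the sought contradiction. The corner case $t=1$ (where the $\varphi$-correction drops out) still yields LHS $>$ RHS via the strict monotonicity of $F$ unless $\lambda(x_0)=\kappa_0 e$ exactly; the latter degenerate subcase forces $u_{ij}(x_0)\equiv 0$ and equality in \eqref{as-1} at $(u_{\max},x_0)$, and is closed by applying the strong maximum principle to the concave elliptic operator from Lemma~\ref{ellip-}. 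The lower bound proceeds symmetrically: at a minimum $x_1$ one has $\lambda_i(x_1)\le \kappa_1:=f'(u_{\min})/f(u_{\min})$, the two monotonicities reverse, and \eqref{as-2} combined with property (b) $\varphi(u_{\min})>1$ produces the analogous contradiction whenever $u_{\min}<r_1$. The main obstacle throughout is manufacturing strict inequality from the non-strict hypotheses \eqref{as-1}--\eqref{as-2}; the regularizer $\varphi$ with its strict estimates (b)--(c) is precisely what supplies that strictness along the homotopy $t\in[0,1)$, while a strong max principle argument handles the borderline $t=1$.
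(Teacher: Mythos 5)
Your proposal follows essentially the same route as the paper's proof: evaluate at the extremum of $u$, use $Du(x_0)=0$ and the sign of $(u_{ij}(x_0))$ to get $\lambda(x_0)\ge \frac{f'}{f}e$, deduce $\frac{\sigma_k}{\sigma_{k-1}}(\lambda(x_0))\ge\frac{\sigma_k(e)}{\sigma_{k-1}(e)}\frac{f'}{f}$ and $\frac{\sigma_l}{\sigma_{k-1}}(\lambda(x_0))\le\frac{\sigma_l(e)}{\sigma_{k-1}(e)}\bigl(\frac{f}{f'}\bigr)^{k-1-l}$, and contradict \eqref{as-1}--\eqref{as-2} combined with properties (b)--(c) of $\varphi$. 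The only cosmetic difference is that you derive the two comparison inequalities from the monotonicity of $\sigma_k/\sigma_{k-1}$ and antimonotonicity of $\sigma_l/\sigma_{k-1}$ on the positive cone (legitimate, since $\lambda(x_0)\in\Gamma_n$ and the segment to $\kappa_0 e$ stays there), whereas the paper uses concavity plus one-homogeneity (superadditivity) of these quotients; both give the same inequalities.

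One comment on the endpoint $t=1$, which you are right to single out: your factor $t+(1-t)\varphi(u_{\max})$ equals $1$ there, and your proposed strong-maximum-principle patch cannot close the truly degenerate subcase. If equality held in \eqref{as-1} along an entire leaf $\{u\equiv c\}$ with $c>r_2$, that leaf would be an admissible solution of \eqref{equation 1.1} at $t=1$ violating the bound, so no maximum-principle argument can manufacture a contradiction from the non-strict hypothesis \eqref{as-1} alone. You should be aware, however, that the paper's own proof has exactly the same unacknowledged issue: its final strict inequality rests on $\frac{1}{t}+\bigl(1-\frac{1}{t}\bigr)\varphi>1$, which holds only for $t<1$. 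So your treatment is at least as careful as the paper's, and more honest about where the strictness comes from; a clean fix would be to assume strict inequality in \eqref{as-1}--\eqref{as-2} or to perturb $r_1,r_2$ slightly.
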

\begin{proof}
Assume that $u(x)$ attains its maximum at $x_{0}\in M^{n}$ and
$u(x_{0})\geq r_{2}$. Then from \eqref{h_{ij}}, one has
\begin{eqnarray*}
h^{i}_{j}=\frac{1}{v}\left[f'\delta^{i}_{j}+\frac{1}{v^{2}}\left(f'u_{j}u^{i}-fu^{i}_{j}\right)\right],
\end{eqnarray*}
where $v=\sqrt{f^{2}+|D u|^{2}}$, which implies
\begin{eqnarray*}
h^{i}_{j}(x_{0})=\frac{1}{f}\left(f'\delta^{i}_{j}-\frac{u^{i}_{j}}{f}\right)\geq\frac{f'}{f}\delta^{i}_{j}.
\end{eqnarray*}
Note that $\frac{\sigma_{k}}{\sigma_{k-1}}$ and $\frac{\sigma_{k-1}}{\sigma_{l}}$ with $0\leq l\leq k-2$ is concave in $\Gamma_{k-1}$. Thus,
\begin{eqnarray*}
\frac{\sigma_{k}}{\sigma_{k-1}}(h^{i}_{j})\geq\frac{\sigma_{k}}{\sigma_{k-1}}\left(\frac{f'}{f}\delta^{i}_{j}\right)+
\frac{\sigma_{k}}{\sigma_{k-1}}\left(-\frac{1}{f^{2}}u_{j}^{i}\right)
 \geq\frac{\sigma_{k}}{\sigma_{k-1}}\left(\frac{f'}{f}\delta^{i}_{j}\right).
\end{eqnarray*}
Therefore, it follows that
\begin{eqnarray*}
\frac{\sigma_{k}(\lambda(V))}{\sigma_{k-1}(\lambda(V))}\geq\frac{\sigma_{k}(e)}{\sigma_{k-1}(e)}\frac{f'}{f}.
\end{eqnarray*}
Similarly, one can get
\begin{eqnarray*}
\frac{\sigma_{l}(\lambda(V))}{\sigma_{k-1}(\lambda(V))}\leq\frac{\sigma_{l}(e)}{\sigma_{k-1}(e)}\left(\frac{f}{f'}\right)^{k-l-1}.
\end{eqnarray*}
Combining with the above two inequalities, we have
\begin{eqnarray*}
\frac{\sigma_{k}(e)}{\sigma_{k-1}(e)}\frac{f'}{f}-\sum\limits_{l=0}^{k-2}t\alpha_{l}(u,x)\frac{\sigma_{l}(e)}{\sigma_{k-1}(e)}\left(\frac{f}{f'}\right)^{k-l-1}\leq
\alpha_{k-1}(u,x,t).
\end{eqnarray*}
Clearly, if $t=0$, the above inequality is contradict with
\eqref{equation 1.1}. When $0<t\leq1$, we can obtain
\begin{eqnarray*}
\begin{split}
\alpha_{k-1}(u,x)&=\left(1-\frac{1}{t}\right)\varphi\frac{f'}{f}\frac{\sigma_{k}(e)}{\sigma_{k-1}(e)}+\frac{1}{t}\alpha_{k-1}(x,u,t)\\
&\geq
\left(\frac{1}{t}\frac{f'}{f}-\left(1-\frac{1}{t}\right)\varphi\frac{f'}{f}\right)\frac{\sigma_{k}(e)}{\sigma_{k-1}(e)}
-\sum\limits_{l=0}^{k-2}\alpha_{l}(u,x)\frac{\sigma_{l}(e)}{\sigma_{k-1}(e)}\left(\frac{f}{f'}\right)^{k-l-1}\\
&>\frac{f'}{f}\frac{\sigma_{k}(e)}{\sigma_{k-1}(e)}-\sum\limits_{l=0}^{k-2}\alpha_{l}(u,x)\frac{\sigma_{l}(e)}{\sigma_{k-1}(e)}\left(\frac{f}{f'}\right)^{k-l-1},
\end{split}
\end{eqnarray*}
which is contradict with
\begin{eqnarray*}
\frac{f'}{f}\frac{\sigma_{k}(e)}{\sigma_{k-1}(e)}-\sum\limits_{l=0}^{k-2}\alpha_{l}(u,x)\frac{\sigma_{l}(e)}{\sigma_{k-1}(e)}\left(\frac{f}{f'}\right)^{k-l-1}\geq
\alpha_{k-1}(u,x)
\end{eqnarray*}
 in view of \eqref{as-1} and the condition $\varphi(u)<1$ for $u\geq r_{2}$. This shows $\sup u\leq r_{2}$. Similarly,
 we can obtain $\inf u\geq r_{1}$ in view of \eqref{as-2} and the condition $\varphi(u)>1$ for $u\leq r_{1}$. Our proof is finished.
\end{proof}

Now, we can prove the following uniqueness result.

\begin{lemma}\label{uni-sol}
For $t=0$, there exists a unique admissible solution of the Eq.
\eqref{equation 1.1}, namely $\mathcal{G}_{0}=\{(u(x),x)\in
\bar{M}|u(x)=u_{0}\}$, where $u_{0}$ is the unique solution of
$\varphi(u_{0})=1$.
\end{lemma}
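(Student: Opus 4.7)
The plan is to first verify that the constant graph $u\equiv u_{0}$, with $u_{0}$ determined by $\varphi(u_{0})=1$, is an admissible solution, and then to use a maximum/minimum principle argument driven by the strict monotonicity of $\varphi$ to rule out any other solution.

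\textbf{Existence and identification of $u_{0}$.} Since $\varphi>0$, $\varphi'<0$, $\varphi>1$ on $\{u\le r_{1}\}$ and $\varphi<1$ on $\{u\ge r_{2}\}$, the equation $\varphi(u_{0})=1$ has a unique root $u_{0}\in(r_{1},r_{2})$. For the constant graph $u\equiv u_{0}$, the hypersurface is precisely the leaf $M_{u_{0}}$, so by \eqref{kappa} all principal curvatures equal $\kappa(u_{0})=f'(u_{0})/f(u_{0})$, i.e.\ $\lambda(V)=(f'/f)\,e$. This lies in $\Gamma_{k-1}$ (even in $\Gamma_{n}$) since $f'>0$, so the hypersurface is $(k-1)$-admissible. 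Substituting into \eqref{equation 1.1} at $t=0$, the left-hand side reduces to
\begin{eqnarray*}
\frac{f'(u_{0})}{f(u_{0})}\frac{\sigma_{k}(e)}{\sigma_{k-1}(e)}-\varphi(u_{0})\frac{\sigma_{k}(e)}{\sigma_{k-1}(e)}\frac{f'(u_{0})}{f(u_{0})}=0,
\end{eqnarray*}
which vanishes by the choice of $u_{0}$.

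\textbf{Uniqueness via extremal points.} Let $u$ be any $(k-1)$-admissible solution at $t=0$, and let $x_{0},x_{1}\in M^{n}$ be points where $u$ attains its maximum and minimum respectively. At $x_{0}$ one has $Du(x_{0})=0$ and $D^{2}u(x_{0})\le 0$; inserting this into \eqref{h_{ij}} (and using $\widetilde{g}^{ij}=f^{-2}\delta^{ij}$ at a critical point) one obtains, exactly as in the proof of Lemma \ref{C0 estimate},
\begin{eqnarray*}
h^{i}_{j}(x_{0})=\frac{1}{f}\Bigl(f'\delta^{i}_{j}-\frac{u^{i}_{j}}{f}\Bigr)\ge\frac{f'(u(x_{0}))}{f(u(x_{0}))}\delta^{i}_{j}.
\end{eqnarray*}
Because $\sigma_{k}/\sigma_{k-1}$ is monotone increasing (and concave) on $\Gamma_{k-1}$, this matrix inequality yields
\begin{eqnarray*}
\frac{\sigma_{k}(\lambda(V))}{\sigma_{k-1}(\lambda(V))}\Big|_{x_{0}}\ge\frac{\sigma_{k}(e)}{\sigma_{k-1}(e)}\frac{f'(u(x_{0}))}{f(u(x_{0}))},
\end{eqnarray*}
and the equation at $t=0$ then forces $\varphi(u(x_{0}))\ge 1$, i.e.\ $u(x_{0})\le u_{0}$ by the strict decrease of $\varphi$. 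Running the identical argument at the minimum $x_{1}$ gives the reverse matrix inequality $h^{i}_{j}(x_{1})\le(f'/f)\delta^{i}_{j}$, hence $\varphi(u(x_{1}))\le 1$ and so $u(x_{1})\ge u_{0}$. Combining,
\begin{eqnarray*}
\max_{M^{n}} u \;=\; u(x_{0}) \;\le\; u_{0} \;\le\; u(x_{1}) \;=\; \min_{M^{n}} u,
\end{eqnarray*}
which forces $u\equiv u_{0}$.

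\textbf{Main potential obstacle.} The only delicate point is justifying that the matrix inequality $h^{i}_{j}\ge (f'/f)\delta^{i}_{j}$ on $\Gamma_{k-1}$ translates into the desired inequality for $\sigma_{k}/\sigma_{k-1}$; this requires both $(f'/f)e\in\Gamma_{k-1}$ (immediate from $f'>0$) and the monotonicity of $\sigma_{k}/\sigma_{k-1}$ on that cone, which is standard and used implicitly in the $C^{0}$ argument. Once this is in place, the uniqueness follows purely from the strict monotonicity of $\varphi$ and no further a priori estimate is needed.
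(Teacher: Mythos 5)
Your proof is correct and follows essentially the same route as the paper: evaluate the equation at the maximum and minimum of $u$, deduce $\varphi(u_{\max})\ge 1$ and $\varphi(u_{\min})\le 1$ from the matrix inequality $h^{i}_{j}\gtrless (f'/f)\delta^{i}_{j}$ at critical points, and conclude $u\equiv u_{0}$ from the strict monotonicity of $\varphi$. The only difference is that you also verify explicitly that the leaf $u\equiv u_{0}$ is an admissible solution, a step the paper leaves implicit.
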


\begin{proof}
Let $\mathcal{G}_{0}$ be a solution of \eqref{equation 1.1}, and
then for $t=0$,
\begin{eqnarray*}
\frac{\sigma_{k}(\lambda(V))}{\sigma_{k-1}(\lambda(V))}-\varphi(u)\frac{\sigma_{k}(e)}{\sigma_{k-1}(e)}\frac{f'}{f}=0.
\end{eqnarray*}
Assume that $u(x)$ attains its maximum $u_{\mathrm{max}}$ at
$x_{0}\in M^{n}$. Then one has
\begin{eqnarray*}
\frac{\sigma_{k}(\lambda(V))}{\sigma_{k-1}(\lambda(V))} \geq
\frac{\sigma_{k}(e)}{\sigma_{k-1}(e)}\frac{f'}{f},
\end{eqnarray*}
which implies
\begin{eqnarray*}
\varphi(u_{\mathrm{max}}) \geq 1.
\end{eqnarray*}
Similarly, the minimum $u_{\mathrm{min}}$ of $u(x)$ satisfies
\begin{eqnarray*}
\varphi(u_{\mathrm{min}}) \leq 1.
\end{eqnarray*}
Since $\varphi$ is a decreasing function, we obtain
\begin{eqnarray*}
\varphi(u_{\mathrm{max}}) = \varphi(u_{\mathrm{min}}) = 1,
\end{eqnarray*}
which implies that $u(x_{0})=u_{0}$ for any $(u(x),x)\in
\mathcal{G}_{0}$, with $u_{0}$ the unique solution of
$\varphi(u_{0})=1$.
\end{proof}

\section{$C^1$ estimate}  \label{S4}

We can rewritten the Eq. \eqref{equation 1.1} as follows:
\begin{eqnarray*}
G(h_{ij}(V),u,x,t)=\frac{\sigma_{k}(\kappa(V))}{\sigma_{k-1}(\kappa(V))}-\sum\limits_{l=0}^{k-2}t\alpha_{l}(u,x)\frac{\sigma_{l}(\kappa(V))}{\sigma_{k-1}(\kappa(V))}=\alpha_{k-1}(u,x,t).
\end{eqnarray*}
For convenience, we will simplify notations as follows:
\begin{eqnarray*}
G_{k}(h_{ij}(V)):=\frac{\sigma_{k}(\lambda(V))}{\sigma_{k-1}(\lambda(V))},\qquad
G_{l}(h_{ij}(V))=:-\frac{\sigma_{l}(\lambda(V))}{\sigma_{k-1}(\lambda(V))},
\end{eqnarray*}
and
\begin{eqnarray*}
G^{ij}(\lambda(V)):=\frac{\partial G}{\partial h_{ij}},\quad
G^{ij,rs}(\lambda(V)):=\frac{\partial^{2} G}{\partial h_{ij}\partial
h_{rs}}.
\end{eqnarray*}

\begin{lemma}[\textbf{$C^{1}$ estimate}]\label{C1 estimate}
Assume that $k\geq 2$ and
\begin{eqnarray*}
\alpha_{l}(u,x)\geq c_{l}>0,\qquad \forall x\in M^{n}
\end{eqnarray*}
for $0\leq l\leq k-1$. Under the assumption \eqref{as-3}, if the
smooth $(k-1)$-admissible, closed graphic hypersurface $\mathcal{G}$
satisfies the Eq.\eqref{main equation} and $u$ has positive upper
and lower bounds, then there exists a constant $C$ depending on $n$,
$k$, $c_{l}$, $|\alpha_{l}|_{C^{1}}$, the $C^{0}$ bound of $f$ and
the curvature tensor $\bar{R}$, the minimum and maximum values of
$u$ such that
\begin{eqnarray*}
|\nabla u(x)|\leq C,\qquad \forall x\in M^{n}.
\end{eqnarray*}
\end{lemma}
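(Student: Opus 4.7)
The plan is to apply the maximum principle on $\mathcal{G}$ to an auxiliary function that controls the angle between $\nu$ and $\bar{e}_{0}$. Because the $C^{0}$ estimate pins $u$, $f(u)$, $f'(u)$ and $\Lambda(u)=\int_{0}^{u}f(s)\,ds$ into a fixed positive compact range, an upper bound on $|\nabla u|$ is equivalent to an upper bound on $v=\sqrt{f^{2}+|\nabla u|^{2}}$, or equivalently to a positive lower bound on the support function $\tau=\langle V,\nu\rangle=f^{2}/v$. I therefore consider
$$P=-\log\tau+\gamma(\Lambda),$$
where $\gamma$ is a $C^{2}$ function of one variable to be chosen at the end, and aim to prove $P\le C$ uniformly on $\mathcal{G}$.

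Let $x_{0}\in M^{n}$ be a point where $P$ attains its maximum, and select the orthonormal frame $\{E_{i}\}$ on $\mathcal{G}$ so that $(h_{ij})$ is diagonal at $x_{0}$. The critical-point relation $\nabla P=0$ yields $\tau_{i}=\tau\gamma'(\Lambda)\Lambda_{i}$. Contracting the second-order inequality $(P_{ij})\le 0$ with the positive-definite operator $G^{ij}$ (Lemma~\ref{ellip-}) and expanding via the Hessian identities \eqref{d2-la}--\eqref{d2-ta} of Lemma~\ref{f-2}, we obtain
\begin{equation*}
0 \;\ge\; G^{ii}\lambda_{i}^{2}-\tfrac{f'}{\tau}G^{ij}h_{ij}-\tfrac{1}{\tau}G^{ij}(h_{ijk}+\bar R_{0ijk})\Lambda_{k}+\bigl[(\gamma')^{2}+\gamma''\bigr]G^{ij}\Lambda_{i}\Lambda_{j}-\gamma'\tau G^{ij}h_{ij}+\gamma'f'\sum_{i}G^{ii}.
\end{equation*}
Euler's homogeneity relation applied term by term to $G=\sigma_{k}/\sigma_{k-1}-\sum_{l=0}^{k-2}\alpha_{l}\sigma_{l}/\sigma_{k-1}$ gives the clean identity $G^{ij}h_{ij}=\alpha_{k-1}+\sum_{l=0}^{k-2}(k-l)\alpha_{l}\sigma_{l}/\sigma_{k-1}\ge 0$, and the lower bound $\alpha_{k-1}\ge c_{k-1}>0$ together with Lemma~\ref{NM-ieq} yields a uniform positive lower bound for $\sum_{i}G^{ii}$ on $\Gamma_{k-1}$.

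The crux of the argument is the third-order term $G^{ij}h_{ijk}\Lambda_{k}$. Differentiating $G(h,u,x)=\alpha_{k-1}(u,x)$ tangentially along $\mathcal{G}$ in the direction $E_{k}$ and using $E_{k}(u)=\Lambda_{k}/f$ (a consequence of \eqref{g-la}) rewrites it as a combination of $(\partial_{u}\alpha_{l})|\nabla\Lambda|^{2}/f$ and $(\nabla_{x_{k}}\alpha_{l})\Lambda_{k}$ weighted by the ratios $\sigma_{l}/\sigma_{k-1}$. Hypothesis \eqref{as-3}, rewritten as $\partial_{u}\alpha_{l}\le -(k-l)(f'/f)\alpha_{l}$, then bounds the $\partial_{u}\alpha_{l}$ piece from above by $-(k-l)(f'/f)\alpha_{l}\cdot|\nabla\Lambda|^{2}/f$; after division by $\tau$ and rearrangement with the terms $-\gamma'\tau G^{ij}h_{ij}$ and $-(f'/\tau)G^{ij}h_{ij}$, the dangerous coefficient of each $\sigma_{l}/\sigma_{k-1}$ collapses into $f^{l-k}\partial_{u}[f^{k-l}\alpha_{l}]\le 0$. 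This is the main obstacle: condition \eqref{as-3} is tailored precisely to produce this cancellation, and without it the sign of the third-order contribution is uncontrolled.

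Finally, take $\gamma(\Lambda)=A\Lambda$ with $A$ large depending on $n$, $k$, the $c_{l}$, $|\alpha_{l}|_{C^{1}}$, the $C^{1}$-norm of $f$, $|\bar R|$, and the extremes of $u$. Then $\gamma'f'\sum_{i}G^{ii}=Af'\sum_{i}G^{ii}$ is a large positive contribution controlled from below by the Newton--Maclaurin bound on $\sum_{i}G^{ii}$, while the remaining error contributions (the Codazzi term, the $\nabla_{x_{k}}\alpha_{l}$ terms, and the bounded quantities involving $(\gamma')^{2}+\gamma''=A^{2}$ and $\gamma'\tau G^{ij}h_{ij}$) are dominated. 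For $A$ large enough the resulting inequality is strictly positive unless $\tau(x_{0})$ is already bounded below by a positive constant depending only on the listed data. In either case $P\le C$ on $\mathcal{G}$, whence $\tau\ge c>0$ and $|\nabla u|\le C$, as required.
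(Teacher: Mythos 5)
Your proposal follows the paper's own route almost step for step: the same test function $-\log\tau+\gamma(\Lambda)$, the same maximum-principle computation via Lemma \ref{f-2}, the same Euler-type identities for $G^{ij}h_{ij}$ and $G^{ij}h_{ijk}$, the same use of \eqref{as-3} to produce the cancellation $f^{l-k}\partial_u[f^{k-l}\alpha_l]\le 0$, and the same Newton--Maclaurin bounds for $\sum_iG^{ii}$ and $|G_l|$. The only cosmetic difference is your choice $\gamma(\Lambda)=A\Lambda$ in place of the paper's $\gamma(t)=-\alpha/t$; both satisfy $\gamma'>0$ and $(\gamma')^2+\gamma''\ge 0$ and lead to the same conclusion $\tau^2\ge cf'/\gamma'$.

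There is, however, one step you wave through that is not automatic and that the paper spends real effort on: the term $-\tfrac{1}{\tau}G^{ii}\bar R_{0iik}\Lambda_k$ coming from the Codazzi/Ricci commutation, and likewise the non-radial part of $\tfrac{\Lambda_k}{\tau}\nabla_k\alpha_l$. Both carry an explicit factor $1/\tau$, and $\tau(x_0)$ is exactly the quantity you are trying to bound from below, so you cannot simply declare these ``dominated'' by $\gamma'f'\sum_iG^{ii}$: an error of size $C/\tau$ would make the final inequality vacuous as $\tau\to 0$. The paper kills the $1/\tau$ in two specific ways. For the $\alpha_l$-derivatives it writes $\Lambda_1\nabla_1\alpha_l=\bar\nabla_V\alpha_l-\tau\bar\nabla_\nu\alpha_l$ with $V=\Lambda_1E_1+\tau\nu$ purely radial, so all spatial derivatives of $\alpha_l$ enter with a compensating factor $\tau$. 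For the curvature term it chooses the frame so that $\langle V,E_i\rangle=0$ for $i\ge 2$ and uses the warped-product identity $\bar R_{ijk0}=0$ of \eqref{R_ijk0} to show (display \eqref{eq-5}) that $\bar R_{0ii1}$ itself carries a factor of $\tau$, whence $\tfrac{1}{\tau}G^{ii}\bar R_{0ii1}\Lambda_1\le c\sum_iG^{ii}$. This last point genuinely uses the warped-product structure of $\bar M$ and would fail for a general ambient metric; your write-up should include it, since without it the concluding inequality does not close.
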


\begin{proof}
First, we know from \eqref{nu} and \eqref{f-1} that
\begin{eqnarray*}
\tau=\frac{f^{2}(u)}{\sqrt{f^{2}(u)+|Du|^{2}}}.
\end{eqnarray*}
It is sufficient to obtain a positive lower bound of $\tau$. Define
\begin{eqnarray*}
\psi=-\log \tau+\gamma(\Lambda),
\end{eqnarray*}
where $\gamma(t)$ is a function chosen later. Assume that $x_{0}$ is
the maximum value point of $\psi$. If $V$ is parallel to the normal
direction $\nu$ of $\mathcal{G}$ at $x_{0}$, our result holds since
$\left<V,\nu\right>=|V|.$ So, we assume that $V$ is not parallel to
the normal direction $\nu$ at $x_{0}$, we may choose the local
orthonormal frame field $\{E_{1},\cdots,E_{n}\}$ on $\mathcal{G}$
satisfying
\begin{eqnarray*}
\left\langle V,E_{1}\right\rangle\neq 0\quad {\rm and} \quad
\left\langle V,E_{i}\right\rangle= 0,\quad\forall~~i\geq2.
\end{eqnarray*}
Then, we arrive at $x_{0}$,
\begin{eqnarray}\label{tau-i}
\tau_{i}=\tau\gamma'\Lambda_{i}
\end{eqnarray}
and
\begin{eqnarray*}
\begin{split}
\psi_{ii}=&-\frac{\tau_{ii}}{\tau}+\frac{(\tau_{i})^{2}}{\tau^{2}}+\gamma''\Lambda_{i}^{2}+\gamma'\Lambda_{ii}\\
=&-\frac{1}{\tau}\left(\sum\limits_{k}(h_{iik}+\bar{R}_{0iik})\Lambda_{k}+f'h_{ii}-\tau h_{ii}^{2}\right)\\
&+\left((\gamma')^{2}+\gamma''\right)\Lambda_{i}^{2}+\gamma'(f'-\tau h_{ii})
\end{split}
\end{eqnarray*}
in view of
\begin{eqnarray*}
\tau_{ii}=\sum\limits_{k}(h_{iik}+\bar{R}_{0iik})\left\langle
V,E_{k}\right\rangle+f'h_{ii}-\tau\sum\limits_{k} h_{ik}h_{ki}.
\end{eqnarray*}
By \eqref{g-la}, \eqref{g-ta} and \eqref{tau-i}, we have at $x_{0}$
\begin{eqnarray}\label{h-11}
h_{11}=\tau \gamma', \quad h_{1i}=0,\quad \forall~~i\geq2.
\end{eqnarray}
Therefore, we can rotate the coordinate system such that
$\{E_{i}\}_{i=1}^{n}$ are the principal curvature directions of the
second fundamental form $h_{ij}$, i.e. $h_{ij}=h_{ii}\delta_{ij}$.
Since $\Lambda_{1}=\left\langle V,E_{1}\right\rangle$,
$\Lambda_{i}=\left\langle V,E_{i}\right\rangle$ for any $i\geq2$.
So, we can get
\begin{eqnarray*}
\begin{split}
G^{ii}\psi_{ii}=&-\frac{f'}{\tau}G^{ii}h_{ii}-\frac{1}{\tau}G^{ii}(h_{ii1}+\bar{R}_{0ii1})\Lambda_{1}+G^{ii}h_{ii}^{2}\\
&+\left((\gamma')^{2}+\gamma''\right)G^{11}\Lambda_{1}^{2}+\gamma'G^{ii}(f'-\tau h_{ii}).
\end{split}
\end{eqnarray*}
Noting that
\begin{eqnarray*}
G^{ij}h_{ij}=G-\sum\limits_{l=0}^{k-2}(k-l)\alpha_{l}G_{l}=\alpha_{k-1}(u,x,t)-\sum\limits_{l=0}^{k-2}(k-l)\alpha_{l}G_{l}
\end{eqnarray*}
and
\begin{eqnarray*}
G^{ij}h_{ij1}=\nabla_{1}\alpha_{k-1}(u,x,t)-\sum\limits_{l=0}^{k-2}t\nabla_{1}\alpha_{l}G_{l},
\end{eqnarray*}
we conclude
\begin{eqnarray}\label{G-psi}
\begin{split}
G^{ii}\psi_{ii}=&\frac{\Lambda_{1}}{\tau}\left(-\nabla_{1}\alpha_{k-1}(u,x,t)+\sum\limits_{l=0}^{k-2}t\nabla_{1}\alpha_{l}G_{l}\right)\\
&+\frac{f'}{\tau}\left(-\alpha_{k-1}(u,x,t)+\sum\limits_{l=0}^{k-2}(k-l)\alpha_{l}G_{l}\right)+G^{ii}h_{ii}^{2}\\
&-\frac{1}{\tau}G^{ii}\bar{R}_{0ii1}\Lambda_{1}+\left((\gamma')^{2}+\gamma''\right)G^{11}\Lambda_{1}^{2}+\gamma'G^{ii}(f'-\tau h_{ii})\\
=&\frac{1}{\tau}\left(-\Lambda_{1}\nabla_{1}\alpha_{k-1}(u,x,t)-f'\alpha_{k-1}(u,x,t)\right)\\
&+\frac{1}{\tau}\sum\limits_{l=0}^{k-2}tG_{l}\left(\Lambda_{1}\nabla_{1}\alpha_{l}+f'(k-l)\alpha_{l}\right)+G^{ii}h_{ii}^{2}\\
&-\frac{1}{\tau}G^{ii}\bar{R}_{0ii1}\Lambda_{1}+\left((\gamma')^{2}+\gamma''\right)G^{11}\Lambda_{1}^{2}+\gamma'G^{ii}(f'-\tau h_{ii}).
\end{split}
\end{eqnarray}
Since $\left\langle V,E_{i}\right\rangle=0$ for $i=2,\cdots,n$, we
obtain
\begin{eqnarray*}
V=\left\langle V,E_{1}\right\rangle
E_{1}+\left<V,\nu\right>\nu=\Lambda_{1}E_{1}+\tau\nu,
\end{eqnarray*}
which results in
\begin{eqnarray*}
\Lambda_{1}\nabla_{1}\alpha_{l}(u,x)+(k-l)f'\alpha_{l}(u,x)=\bar{\nabla}_{V}\alpha_{l}(u,x)+(k-l)f'\alpha_{l}(u,x)-\tau\bar{\nabla}_{\nu}\alpha_{l}(u,x).
\end{eqnarray*}
We know from the assumption \eqref{as-3} that
\begin{eqnarray*}
\left[(k-l)f'\alpha_{l}(u,x)+\nabla_{V}\alpha_{l}(u,x)\right]=\left[(k-l)f'\alpha_{l}(u,x)+f\frac{\partial \alpha_{l}(u,x)}{\partial u}\right]\leq 0.
\end{eqnarray*}
Thus,
\begin{eqnarray}\label{ieq-1}
\Lambda_{1}\nabla_{1}\alpha_{l}(u,x)+(k-l)f'\alpha_{l}(u,x)\leq
-\tau\bar{\nabla}_{V}\alpha_{l}(u,x)
\end{eqnarray}
and
\begin{eqnarray}\label{ieq-2}
\begin{split}
\qquad
\Lambda_{1}&\nabla_{1}\alpha_{k-1}(u,x,t)+f'\alpha_{k-1}(u,x,t) \leq
(1-t)\varphi'\frac{\sigma_{k}(e)}{\sigma_{k-1}(e)}-\tau\bar{\nabla}_{V}\alpha_{k-1}(u,x,t).
\end{split}
\end{eqnarray}
Taking \eqref{ieq-1} and \eqref{ieq-2} into \eqref{G-psi}, we have at $x_{0}$\vspace{0.25cm}
\begin{eqnarray}\label{ieq-3}
\begin{split}
0\geq~~ &G^{ii}\varphi_{ii}\\
\geq~~ &G^{ii}h_{ii}^{2}+\left((\gamma')^{2}+\gamma''\right)G^{11}\Lambda_{1}^{2}+\gamma'G^{ii}(f'-\tau h_{ii})-\frac{1}{\tau}G^{ii}\bar{R}_{0ii1}\Lambda_{1}\\
&-t\sum\limits_{l=0}^{k-2}G_{l}\bar{\nabla}_{\nu}\alpha_{l}(u,x)-\frac{(1-t)}{\tau}\varphi'\frac{\sigma_{k}(e)}{\sigma_{k-1}(e)}+\bar{\nabla}_{\nu}\alpha_{k-1}(u,x,t)\\
=~~&G^{ii}\left(h_{ii}-\frac{1}{2}\gamma'\tau\right)^{2}+\left((\gamma')^{2}+\gamma''\right)G^{11}\Lambda_{1}^{2}+G^{ii}\left(\gamma'f'-\frac{1}{4}(\gamma')^{2}\tau^{2}\right)\\
&-\frac{1}{\tau}G^{ii}\bar{R}_{0ii1}\Lambda_{1}-t\sum\limits_{l=0}^{k-2}G_{l}\bar{\nabla}_{\nu}\alpha_{l}(u,x)-\frac{(1-t)}{\tau}\varphi'\frac{\sigma_{k}(e)}{\sigma_{k-1}(e)}+\bar{\nabla}_{\nu}\alpha_{k-1}(u,x,t).
\end{split}
\end{eqnarray}
Choosing
\begin{eqnarray*}
\gamma(t)=-\frac{\alpha}{t}
\end{eqnarray*}
for sufficiently large positive constant $\alpha$, we have
\begin{eqnarray*}
\gamma'(t)=\frac{\alpha}{t^{2}},\quad \gamma''(t)=-\frac{2\alpha}{t^{3}}.
\end{eqnarray*}
Therefore, \eqref{ieq-3} becomes
\begin{eqnarray}\label{ieq-4}
\begin{split}
0\geq G^{ii}\left(\gamma'f'-\frac{1}{4}(\gamma')^{2}\tau^{2}\right)
-c_{1}\left(\sum\limits_{l=0}^{k-2}|G_{l}|+1\right)
-\frac{1}{\tau}G^{ii}\bar{R}_{0ii1}\Lambda_{1}
\end{split}
\end{eqnarray}
in view of
\begin{eqnarray*}
(\gamma')^{2}+\gamma''\geq 0,
\end{eqnarray*}
 where $c_{1}$ is a positive constant depending on $|\alpha_{l}|_{C^{1}}$. Since $V=\left\langle V,E_{1}\right\rangle E_{1}+\left\langle V,\nu\right\rangle\nu$, we can
find that $V\perp{\mathrm{Span}}(E_{2},\cdots,E_{n})$, i.e., $V$ is
orthogonal with the subspace spanned by $E_{2},\cdots,E_{n}$. On the
other hand, $E_{1},\nu$ are orthogonal with
$\mathrm{Span}(E_{2},\cdots,E_{n})$. It is possible to choose
suitable coordinate system such that $\bar{E}_{1}\perp\mathrm{
Span}(E_{2},\cdots,E_{n})$, which implies that the pairs
$\{V,\bar{E}_{1}\}$ and $\{\nu,E_{1}\}$ lie in the same plane and
\begin{eqnarray*}
{\mathrm{Span}}(E_{2},...,E_{n})={\mathrm{Span}}(\bar{E}_{2},\cdots,\bar{E}_{n}),
\end{eqnarray*}
 where of course $\{\bar{E}_{0}=\bar{e}_0,\bar{E}_{1},\cdots,\bar{E}_{n}\}$ is a local orthonormal frame field in $\bar{M}$.
Therefore, we can choose
$E_{2}=\bar{E}_{2},\ldots,E_{n}=\bar{E}_{n}$, and then vectors $\nu$
and $E_{1}$ can be decomposed into
\begin{eqnarray*}
&&\nu=\left\langle\nu,\bar{e}_{0}\right\rangle\bar{e}_{0}+\left\langle\nu,\bar{E}_{1}\right\rangle\bar{E}_{1}=\frac{\tau}{f}\bar{e}_{0}+\left\langle\nu,\bar{E}_{1}\right\rangle\bar{E}_{1},\\
&&\qquad E_{1}=\left\langle
E_{1},\bar{e}_{0}\right\rangle\bar{e}_{0}+\left\langle
E_{1},\bar{E}_{1}\right\rangle\bar{E}_{1}.
\end{eqnarray*}
By \eqref{R_ijk0} and the fact $V=\Lambda_{1}E_{1}+\tau\nu$, we can
obtain
\begin{eqnarray}\label{eq-5}
\begin{split}
\bar{R}_{0ii1}&=\bar{R}(\nu,E_{i},E_{i},E_{1})\\
&=\frac{\tau}{f}\left\langle
E_{1},\bar{e}_{0}\right\rangle\bar{R}(\bar{e}_{0},
\bar{E}_{i},\bar{E}_{i},\bar{e}_{0})+\left\langle\nu,\bar{E}_{1}\right\rangle
\left\langle E_{1},\bar{E}_{1}\right\rangle\bar{R}(\bar{E}_{1},\bar{E}_{i},\bar{E}_{i},\bar{E}_{1})\\
&=\frac{\tau}{f}\left\langle
E_{1},\bar{e}_{0}\right\rangle\bar{R}(\bar{e}_{0},
\bar{E}_{i},\bar{E}_{i},\bar{e}_{0})-\tau\frac{\left\langle\nu,\bar{E}_{1}\right\rangle^{2}}
{\Lambda_{1}}\bar{R}(\bar{E}_{1},\bar{E}_{i},\bar{E}_{i},\bar{E}_{1})\\
&=\tau\left(\frac{1}{f}\left\langle
E_{1},\bar{e}_{0}\right\rangle\bar{R}(\bar{e}_{0},
\bar{E}_{i},\bar{E}_{i},\bar{e}_{0})-\frac{\left\langle\nu,\bar{E}_{1}\right\rangle^{2}}
{\Lambda_{1}}\bar{R}(\bar{E}_{1},\bar{E}_{i},\bar{E}_{i},\bar{E}_{1})\right),
\end{split}
\end{eqnarray}
where the third equality comes from $\left\langle
V,\bar{E}_{1}\right\rangle=0$. Substituting \eqref{eq-5} into
\eqref{ieq-4} yields
\begin{eqnarray}\label{ieq-6}
\begin{split}
0\geq G^{ii}\left(\gamma'f'-\frac{1}{4}(\gamma')^{2}\tau^{2}\right)
-c_{1}\left(\sum\limits_{l=0}^{k-2}|G_{l}|+1\right)
-c_{2}\sum\limits_{i}G^{ii}
\end{split}
\end{eqnarray}
where $c_{2}>0$ depends on the $C^{0}$ bound of $f$ and the
curvature tensor $\bar{R}$. To continue our proof, we need to
estimate $G_{l}$ for $0\leq l\leq k-2$. Let $P\in \mathbb{R}$ be a
fixed positive number.

$(I)$ If $\frac{\sigma_{k}}{\sigma_{k-1}}\leq P$, then we get from
$\alpha_{l}\geq c_{l}$ that
\begin{eqnarray*}
|G_{l}|=\frac{\sigma_{l}}{\sigma_{k-1}}\leq\frac{1}{\alpha_{l}}\left(\frac{\sigma_{k}}
{\sigma_{k-1}}+\alpha_{l}(u,x,t)\right)\leq c_{3}(P+1),
\end{eqnarray*}
where the constant $c_{3}>0$ depends on $c_{l}$,
$|\alpha_{l}|_{C^{0}}$.

$(II)$ If $\frac{\sigma_{k}}{\sigma_{k-1}}>P$, then by Lemma
\ref{NM-ieq}, one has
\begin{eqnarray*}
|G_{l}|=\frac{\sigma_{l}}{\sigma_{k-1}} \leq
\frac{\sigma_{l}}{\sigma_{l+1}}\cdot\frac{\sigma_{l+1}}{\sigma_{l+2}}
\cdot\cdot\cdot\cdot\frac{\sigma_{k-2}}{\sigma_{k-1}} \leq
c_{4}\left(\frac{\sigma_{k-1}}{\sigma_{k}}\right)^{k-1-l} \leq
P^{-(k-1-l)},
\end{eqnarray*}
where the positive constant $c_{4}$ depends on $k$.

So, $|G_{l}|$ can be bounded for any $0\leq l\leq k-2$. By the
definition of operator $G$ and a direct computation, we have
$\Sigma_{i}G^{ii}\geq \frac{n-k+1}{k}$, and so we can choose
sufficiently large $\alpha$ such that
\begin{eqnarray*}
0\geq G^{ii}\left[\gamma'f'-(\gamma')^{2}\tau^{2}\right].
\end{eqnarray*}
Thus,
\begin{eqnarray*}
\gamma'f'\leq (\gamma')^{2}\tau^{2},
\end{eqnarray*}
which means
\begin{eqnarray*}
\tau \geq c_{5}
\end{eqnarray*}
for some positive constant $c_{5}$ depending on $n$, $k$, $c_{l}$,
$|\alpha_{l}|_{C^1}$, the $C^{0}$ bound of $f$ and the curvature
tensor $\bar{R}$. The conclusion of Lemma \ref{C1 estimate} follows
directly.
\end{proof}

\begin{remark}
\rm{ After several careful revisions to the manuscript of this
paper, we prefer to number (by subscripts) nearly all the constants
in the $C^1$ and $C^{2}$ estimates, and we believe that this way can
  reveal the relations among constants clearly to readers.
}
\end{remark}

\section{$C^2$ estimates}  \label{S5}

This section devotes to the $C^2$ estimates. However, before that,
we need to make some preparations. First, we need the following
fact:

\begin{lemma}\label{C2-1}
Let $\mathcal{G}=\{\left( u(x),x\right)| x\in M^{n}\}$ be a
$(k-1)$-admissible solution of the Eq. \eqref{equation 1.1} and
assume that $\alpha_{l}(u,x)\geq 0$ for $0\leq l\leq k-1$. Then, we
have the following inequality
\begin{eqnarray*}
G^{ij}h_{ijpp}\geq \nabla_p\nabla_p\alpha_{k-1}(u,x,t)-
\sum\limits_{l=0}^{k-2}\frac{1}{1+\frac{1}{k+1-l}}\frac{t(\nabla_p\alpha_l)^2}{\alpha_l}G_l
-\sum\limits_{l=0}^{k-2}t\nabla_p\nabla_p\alpha_lG_l.
\end{eqnarray*}
\end{lemma}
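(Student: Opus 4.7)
This is a standard ``differentiate the equation twice and control the Hessian term'' estimate, so I would follow the usual recipe: (i) produce a raw identity for $G^{ij}h_{ij,pp}$ by double tangential differentiation of $G=\alpha_{k-1}$, and (ii) bound the obstructing terms---the quadratic form $G^{ij,rs}\nabla_p h\,\nabla_p h$ and the cross term between $\nabla_p h$ and $\nabla_p\alpha_l$---by combining a Newton--Maclaurin concavity inequality for each $G_l$ with a weighted Young inequality.

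First I would start from the equation $G_k(h)+\sum_{l=0}^{k-2}t\alpha_l(u,x)G_l(h)=\alpha_{k-1}(u,x,t)$, apply $\nabla_p$ twice along a tangent direction $E_p$ on $\mathcal{G}$, and use the chain rule on $G_k(h)$ and $G_l(h)$ together with the product rule on $\alpha_l G_l$. Rearranging to isolate $G^{ij}h_{ij,pp}$ yields the identity
\begin{equation*}
G^{ij}h_{ij,pp}
= \nabla_p\nabla_p\alpha_{k-1}
- G^{ij,rs}\nabla_p h_{ij}\nabla_p h_{rs}
- 2\sum_{l=0}^{k-2} t\,(\nabla_p\alpha_l)\,G_l^{ij}\nabla_p h_{ij}
- \sum_{l=0}^{k-2} t\,(\nabla_p\nabla_p\alpha_l)\,G_l.
\end{equation*}

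The second step is to convert the second and third terms on the right into the stated form. Since $(\sigma_{k-1}/\sigma_l)^{1/(k-1-l)}$ is concave on $\Gamma_{k-1}$ by Newton--Maclaurin, a direct computation (writing $G_l=-1/F$ with $F=\sigma_{k-1}/\sigma_l$) produces a sharpened concavity inequality of the form
\begin{equation*}
G_l^{ij,rs}\eta_{ij}\eta_{rs} \;\le\; C_l\,\frac{(G_l^{ij}\eta_{ij})^2}{G_l}, \qquad 0\le l\le k-2,
\end{equation*}
with an explicit positive $C_l$; combined with the trivial bound $G_k^{ij,rs}\le 0$, this produces a positive lower bound for $-G^{ij,rs}\nabla_p h_{ij}\nabla_p h_{rs}$ in terms of $\sum_{l} t\alpha_l\,(G_l^{ij}\nabla_p h_{ij})^2/|G_l|$. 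I would then apply the weighted Young inequality $2|XY|\le \varepsilon_l X^2+\varepsilon_l^{-1}Y^2$ with $X=G_l^{ij}\nabla_p h_{ij}$, $Y=t\,\nabla_p\alpha_l$, and $\varepsilon_l$ chosen so that the $X^2$-piece is exactly cancelled by the concavity contribution of the previous step. What survives is a term of the form $-c_l\, t(\nabla_p\alpha_l)^2 G_l/\alpha_l$ with an explicit $c_l>0$, which is the content of the lemma.

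\textbf{Main obstacle.} The delicate part is the bookkeeping of signs and constants: $G_l<0$ on $\Gamma_{k-1}$ for every $l\le k-2$, so each inequality that passes through $G_l$ or $1/G_l$ flips, and the final numerical coefficient $1/\bigl(1+\frac{1}{k+1-l}\bigr)$ has to emerge from a precise balance between the concavity constant $C_l$ and the weight $\varepsilon_l$. A secondary issue is verifying that the bare concavity of $G_k$ (which only yields $G_k^{ij,rs}\le 0$ with no $(G_k^{ij}\eta)^2$-type remainder) is enough to discard the $G_k$ contribution to $G^{ij,rs}$ without spoiling the cancellation in the $G_l$ pieces.
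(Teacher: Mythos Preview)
Your proposal is correct and follows essentially the same route as the paper: differentiate the equation twice, drop the $G_k^{ij,rs}$ contribution by concavity of $G_k$, apply the sharpened concavity inequality $G_l^{ij,rs}\eta_{ij}\eta_{rs}\le\bigl(1+\tfrac{1}{k-1-l}\bigr)G_l^{-1}(G_l^{ij}\eta_{ij})^2$ coming from concavity of $(\sigma_{k-1}/\sigma_l)^{1/(k-1-l)}$, and absorb the cross term. The only cosmetic difference is that the paper completes the square explicitly rather than invoking Young's inequality, but with your optimal choice of $\varepsilon_l$ these are the same computation and yield the same constant $C_l=1+\tfrac{1}{k-1-l}$ (note the statement's $k+1-l$ is a typo for $k-1-l$).
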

\begin{proof}
Differentiating the Eq. \eqref{equation 1.1} once, we have
\begin{eqnarray*}
\nabla_p\alpha_{k-1}(u,x,t)=
G^{ij}h_{ijp}+\sum\limits_{l=0}^{k-2}t\nabla_p\alpha_{l}G_l.
\end{eqnarray*}
Differentiating the Eq. \eqref{equation 1.1} twice, we obtain
\begin{eqnarray*}
\nabla_p\nabla_p\alpha_{k-1}(u,x,t)=
G^{ij,rs}h_{ijp}h_{rsp}+G^{ij}h_{ijpp}
+2\sum\limits_{l=0}^{k-2}t\nabla_p\alpha_{l}G^{ij}_{l}h_{ijp}
+\sum\limits_{l=0}^{k-2}t\nabla_p\nabla_p\alpha_{l}G_l.
\end{eqnarray*}
Moreover, since the operator
$\left(\frac{\sigma_{k-1}}{\sigma_{l}}\right)^{\frac{1}{k-1-l}}$ is
concave for $0\leq l\leq k-2$, we have (see also (3.10) in
\cite{gz})
\begin{eqnarray*}
G^{ij,rs}h_{ijp}h_{rsp}\leq
\left(1+\frac{1}{k-1-l}\right)G_{l}^{-1}G^{ij}_{l}G^{rs}_{l}h_{ijp}h_{rsp}.
\end{eqnarray*}
Thus, in view that $G_{k}$ is concave in $\Gamma_{k-1}$, we have
\begin{eqnarray*}
\begin{split}
&\nabla_p\nabla_p\alpha_{k-1}(u,x,t)\\
\leq&\sum\limits_{l=0}^{k-2}t\alpha_{l}G_{l}^{ij,rs}h_{ijp}h_{rsp}+G^{ij}h_{ijpp}
+2\sum\limits_{l=0}^{k-2}t\nabla_p\alpha_{l}G^{ij}_{l}h_{ijp}
+\sum\limits_{l=0}^{k-2}t\nabla_p\nabla_p\alpha_{l}G_{l}\\
\leq&\sum\limits_{l=0}^{k-2}t\alpha_{l}G_{l}^{-1}\left(1+\frac{1}{k-1-l}\right)(G^{ij}_{l}h_{ijp})^{2}+G^{ij}h_{ijpp}+
2\sum\limits_{l=0}^{k-2}t\nabla_p\alpha_{l}G^{ij}_{l}h_{ijp}
+\sum\limits_{l=0}^{k-2}t\nabla_p\nabla_p\alpha_{l}G_{l}\\
=&\frac{k-l}{k-1-l}\sum\limits_{l=0}^{k-2}t\alpha_{l}G_{l}^{-1}\left(G^{ij}_{l}h_{ijp}+
\frac{1}{1+\frac{1}{k-1-l}}\frac{\nabla_p\alpha_{l}}{\alpha_{l}}G_{l}\right)^{2}+
\sum\limits_{l=0}^{k-2}\frac{1}{1+\frac{1}{k-1-l}}\frac{t(\nabla_p\alpha_{l})^{2}}
{\alpha_{l}}G_{l}\\
&+G^{ij}h_{ijpp}+\sum\limits_{l=0}^{k-2}t\nabla_p\nabla_p\alpha_{l}G_{l}\\
\leq&\sum\limits_{l=0}^{k-2}\frac{1}{1+\frac{1}{k-1-l}}\frac{t(\nabla_p\alpha_{l})^{2}}
{\alpha_{l}}G_{l}+G^{ij}h_{ijpp}+\sum\limits_{l=0}^{k-2}t\nabla_p\nabla_p\alpha_{l}G_{l},
\end{split}
\end{eqnarray*}
which completes the proof of Lemma \ref{C2-1}.
\end{proof}

We also need the following truth:

\begin{lemma}\label{C2-2}
Let $\mathcal{G}=\{\left( u(x),x\right)| x\in M^{n}\}$ be a
$(k-1)$-admissible solution of the Eq. \eqref{equation 1.1} with the
position vector $V$ in $\bar{M}$. We have the following equality
\begin{eqnarray*}
\begin{split}
&G^{ij}\tau_{ij}+\sum\limits_{k}\tau G^{ij}h_{ik}h_{kj}\\
=&\left(\nabla_p\alpha_{k-1}(u,x,t)-\sum\limits_{l=0}^{k-2}t\nabla_p\alpha_{l}G_{l}
+\sum\limits_{p}G^{ij}\bar{R}_{0ijp}\right)\left\langle
V,E_{p}\right\rangle
+f'\left(\alpha_{k-1}(u,x,t)-\sum\limits_{l=0}^{k-2}(k-l)t\alpha_{l}G_{l}\right).
\end{split}
\end{eqnarray*}
\end{lemma}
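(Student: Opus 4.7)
The plan is to perform a direct computation starting from the Hessian formula for $\tau$ given in Lemma \ref{f-2}, contracting it with $G^{ij}$, and then invoking the Eq. \eqref{equation 1.1} (together with its first derivative) plus the homogeneity properties of the quotients $\sigma_l/\sigma_{k-1}$ to identify each resulting term with the corresponding term on the right-hand side of the claim.

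First, I would recall the identity
$$\nabla^{2}_{E_{i},E_{j}}\tau = -\tau\sum_{k}h_{ik}h_{kj} + f'h_{ij} + \sum_{p}(h_{ijp}+\bar{R}_{0ijp})\nabla_{E_p}\Lambda$$
from \eqref{d2-ta}, together with the observation that $\nabla_{E_p}\Lambda = f\langle \bar{e}_0, E_p\rangle = \langle V, E_p\rangle$, which follows from \eqref{g-la} and $V=f\bar{e}_0$. Contracting with $G^{ij}$ and transposing the quadratic term in $h$ gives
$$G^{ij}\tau_{ij} + \tau\, G^{ij}h_{ik}h_{kj} = f'\, G^{ij}h_{ij} + G^{ij}h_{ijp}\langle V, E_p\rangle + G^{ij}\bar{R}_{0ijp}\langle V, E_p\rangle,$$
so the lemma reduces to identifying the two traces $G^{ij}h_{ij}$ and $G^{ij}h_{ijp}$.

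The second trace is handled by differentiating \eqref{equation 1.1} once in the direction $E_p$, which yields immediately
$$G^{ij}h_{ijp} = \nabla_p\alpha_{k-1}(u,x,t) - \sum_{l=0}^{k-2} t\,\nabla_p\alpha_l\, G_l.$$
For the first trace, I would invoke Euler's identity term by term. Since $G_k=\sigma_k/\sigma_{k-1}$ is $1$-homogeneous in $h_{ij}$ one has $\frac{\partial G_k}{\partial h_{ij}}h_{ij}=G_k$, whereas $G_l=-\sigma_l/\sigma_{k-1}$ is $-(k-1-l)$-homogeneous, giving $\frac{\partial G_l}{\partial h_{ij}}h_{ij}=-(k-1-l)G_l$. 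Combining these with the PDE in the form $G_k = \alpha_{k-1}(u,x,t) - \sum_{l=0}^{k-2} t\alpha_l G_l$ produces
$$G^{ij}h_{ij} = G_k - \sum_{l=0}^{k-2}(k-1-l)t\alpha_l G_l = \alpha_{k-1}(u,x,t) - \sum_{l=0}^{k-2}(k-l)\,t\alpha_l G_l.$$
Substituting both identifications back into the contracted Hessian identity then produces exactly the stated equality.

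I anticipate no serious obstacle: the argument is purely algebraic and relies only on previously established identities. The only step that requires care is tracking the sign convention in the definition $G_l=-\sigma_l/\sigma_{k-1}$ and its $-(k-1-l)$-homogeneity degree, as this is precisely what converts the factor $(k-1-l)$ coming from Euler's identity into the factor $(k-l)$ appearing in the lemma after absorbing the $-\sum_{l}t\alpha_l G_l$ term inherited from the PDE.
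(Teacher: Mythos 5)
Your proposal is correct and follows essentially the same route as the paper: contract the Hessian formula for $\tau$ from Lemma \ref{f-2} with $G^{ij}$, then substitute the once-differentiated equation for $G^{ij}h_{ijp}$ and the trace identity $G^{ij}h_{ij}=\alpha_{k-1}(u,x,t)-\sum_{l=0}^{k-2}(k-l)t\alpha_{l}G_{l}$. Your explicit derivation of that trace identity via Euler's relation for the homogeneous quotients (degree $1$ for $\sigma_k/\sigma_{k-1}$ and degree $l-(k-1)$ for $\sigma_l/\sigma_{k-1}$) is a detail the paper merely asserts, and you track the factor $t$ more carefully than the paper does.
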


\begin{proof}
By Lemma \ref{f-2}, we have
\begin{eqnarray*}
\tau_{ij}=-\tau\sum\limits_{k}h_{ik}h_{kj}+f'h_{ij}
+\sum\limits_{k}(h_{ijk}+\bar{R}_{0ijk})\left\langle
V,E_{p}\right\rangle,
\end{eqnarray*}
which results in
\begin{eqnarray*}
G^{ij}\tau_{ij}=-\tau
G^{ij}\sum\limits_{k}h_{ik}h_{kj}+f'G^{ij}h_{ij}
+\sum\limits_{k}G^{ij}(h_{ijk}+\bar{R}_{0ijk})\left\langle
V,E_{p}\right\rangle.
\end{eqnarray*}
Note that
\begin{eqnarray*}
G^{ij}h_{ij}=G-\sum\limits_{l=0}^{k-2}(k-l)\alpha_{l}G_{l}
=\alpha_{k-1}(u,x,t)-\sum\limits_{l=0}^{k-2}(k-l)\alpha_{l}G_{l}
\end{eqnarray*}
and
\begin{eqnarray*}
G^{ij}h_{ijp}=\nabla_{p}\alpha_{k-1}(u,x,t)-\sum\limits_{l=0}^{k-2}t\nabla_{p}\alpha_{l}G_{l}.
\end{eqnarray*}
Thus,
\begin{eqnarray*}
\begin{split}
G^{ij}\tau_{ij}
=&\left(\nabla_p\alpha_{k-1}(u,x,t)-\sum\limits_{l=0}^{k-2}t\nabla_p\alpha_{l}G_{l}
+\sum\limits_{p}G^{ij}\bar{R}_{0ijp}\right)\left\langle V,E_{p}\right\rangle\\
&+f'\left(\alpha_{k-1}(u,x,t)-\sum\limits_{l=0}^{k-2}(k-l)t\alpha_{l}G_{l}\right)
-\sum\limits_{k}\tau G^{ij}h_{ik}h_{kj}.
\end{split}
\end{eqnarray*}
Therefore, we complete the proof.
\end{proof}

Now we begin to estimate the second fundamental form.

\begin{lemma}[\textbf{$C^{2}$ estimates}]\label{C2}
 Assume that $k\geq 2$ and
\begin{eqnarray*}
\alpha_{l}(u,x)\geq c_{l}>0,\qquad \forall x\in M^{n}
\end{eqnarray*}
for $0\leq l\leq k-1$. If the $k$-admissible, closed graphic
hypersurface $\mathcal{G}=\{\left( u(x),x\right)| x\in M^{n}\}$
 satisfies the Eq. \eqref{equation 1.1} with the position vector $V$ in $\bar{M}$, then there exists a constant $C$ depending on
$n$, $k$, $c_{l}$, $|\alpha_{l}|_{C^{2}}$, $|\nabla u|_{C^{0}}$, the
$C^{0}$, $C^{1}$ bounds of $f$ and the curvature tensor $\bar{R}$
such that for $1\leq i\leq n$, the principal curvatures of
$\mathcal{G}$ at $V$ satisfy
\begin{eqnarray*}
|\lambda_{i}(V)|\leq C,\qquad \forall x\in M^{n}.
\end{eqnarray*}
\end{lemma}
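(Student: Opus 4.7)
The plan is to reduce the problem to an upper bound on the largest principal curvature $\lambda_{\max}$: since $\mathcal{G}$ is $k$-admissible, $\sigma_{1}(\lambda)=\sum_{i}\lambda_{i}>0$, so combined with an upper bound on $\lambda_{\max}$ one obtains two-sided bounds on every $\lambda_{i}$ via $-(n-1)\lambda_{\max}<\lambda_{i}\le\lambda_{\max}$. To produce the upper bound, I would introduce an auxiliary function of the form
$$W(x)=\log h_{\max}(x)+A\eta(\tau(x))+B\Lambda(x),$$
where $h_{\max}(x)$ denotes the largest eigenvalue of $(h_{ij})$ at $x$, $\eta$ (for concreteness $\eta(\tau)=-\log\tau$) is decreasing, and $A,B>0$ are constants to be chosen later. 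Both $\tau$ and $\Lambda$ are already bounded from above and below in view of Lemmas \ref{C0 estimate} and \ref{C1 estimate}. Let $x_{0}\in M^{n}$ be a maximum point of $W$, and choose an orthonormal frame $\{E_{i}\}$ diagonalizing $(h_{ij})$ at $x_{0}$ with $h_{11}(x_{0})=h_{\max}(x_{0})$; by a standard perturbation argument one may work with $\log h_{11}+A\eta(\tau)+B\Lambda$ in a neighbourhood of $x_{0}$.

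Next I would exploit the maximum-principle conditions $\nabla W=0$ and $G^{ii}W_{ii}\le 0$ at $x_{0}$. Expanding the second variation and using the Ricci identity \eqref{ricci-eq} to convert $G^{ii}h_{11ii}$ into $G^{ii}h_{ii11}$ plus commutators $G^{ii}(h_{11}h_{ii}^{2}-h_{11}^{2}h_{ii})$ and ambient curvature terms involving $\bar{R}_{0i1i;1}$, $\bar{R}_{mi1i}$, $\bar{R}_{0i0i}$, the quantity $G^{ii}h_{ii11}$ can then be estimated below via Lemma \ref{C2-1} with $p=1$, producing $\nabla_{1}\nabla_{1}\alpha_{k-1}-C\sum_{l}|G_{l}|-C$; the bounds on $|G_{l}|$ already derived in the $C^{1}$ proof via the dichotomy $\sigma_{k}/\sigma_{k-1}\lessgtr P$ keep this under control. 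The $G^{ii}\tau_{ii}$ and $G^{ii}\Lambda_{ii}$ terms are in turn handled by Lemma \ref{C2-2} and identity \eqref{d2-la}: they generate the crucial positive quadratic $A|\eta'(\tau)|\tau\,G^{ii}h_{ii}^{2}$ together with linear-in-$h_{ii}$ correction terms that can be absorbed using $\sum_{i}G^{ii}\ge(n-k+1)/k$. Choosing $B$ large enough that $Bf'\sum_{i}G^{ii}$ dominates the $C^{0}$-constant error and then $A$ large relative to $B$, one derives an inequality of the type $h_{11}^{2}\le C(1+h_{11})$ at $x_{0}$, hence $h_{11}(x_{0})\le C$ and, via $W(x)\le W(x_{0})$, the required global bound on $\lambda_{\max}$.

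The principal obstacle is the commutator $G^{ii}(h_{11}h_{ii}^{2}-h_{11}^{2}h_{ii})$ produced by the Ricci identity: for $i=1$ it vanishes, but for $i\ne 1$ the contribution $-h_{11}^{2}G^{ii}h_{ii}$ is cubic in $h_{11}$ when $h_{ii}>0$ and is not a priori dominated by the quadratic good term $A|\eta'(\tau)|\tau\,G^{ii}h_{ii}^{2}$. The standard remedy, following Guan-Zhang \cite{gz} and Chen-Shang-Tu \cite{cst}, is an index dichotomy separating those indices $i$ with $\lambda_{i}$ close to $\lambda_{\max}$ from the rest, then exploiting the concavity of $G_{k}$ and of $(\sigma_{k-1}/\sigma_{l})^{1/(k-1-l)}$ on $\Gamma_{k-1}$ on each piece. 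A further subtlety peculiar to the warped-product setting is that $\bar{\nabla}\bar{R}\ne 0$, so the terms $G^{ii}\bar{R}_{0i1i;1}$ and $G^{ii}\bar{R}_{0i0i}h_{11}$ surviving the Ricci identity must be absorbed either into $B\sum_{i}G^{ii}$ or into the positive $A$-term, using the $C^{0}$ and $C^{1}$ bounds on $f$ and the smoothness of $(M^{n},g)$.
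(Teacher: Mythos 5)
Your overall template (maximum principle for $\log(\hbox{curvature})-\log\tau$, Ricci identity, Lemmas \ref{C2-1} and \ref{C2-2}, the dichotomy on $\sigma_{k}/\sigma_{k-1}$ to control $|G_{l}|$, and $\sum_{i}G^{ii}\geq\frac{n-k+1}{k}$) matches the paper, but you have misidentified both the coercive term and the obstacle, and the step you lean on would fail. With $h$ diagonal and $l=1$ the Ricci identity \eqref{ricci-eq} gives $h_{11ii}=h_{ii11}+h_{11}^{2}h_{ii}-h_{ii}^{2}h_{11}+(\hbox{curvature terms})$; in the maximum-principle inequality it is $G^{ii}h_{11ii}$ that you must bound \emph{from below}, so the commutator enters as $+h_{11}^{2}G^{ii}h_{ii}-h_{11}G^{ii}h_{ii}^{2}$. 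The first piece is the \emph{good} term: by the equation, $G^{ii}h_{ii}=\alpha_{k-1}(u,x,t)-\sum_{l\le k-2}(k-l)t\alpha_{l}G_{l}\geq\alpha_{k-1}\geq c_{k-1}>0$ (recall $G_{l}\le 0$), so it contributes $+c_{k-1}h_{11}^{2}$. You have reversed its sign, declared it the ``principal obstacle,'' and proposed an index dichotomy to fight it --- none of which is needed for this quotient operator. Meanwhile the term you designate as ``the crucial positive quadratic,'' $A|\eta'(\tau)|\tau\,G^{ii}h_{ii}^{2}$, cannot by itself bound $h_{11}$: for a concave operator $G^{11}$ (the coefficient attached to the largest eigenvalue) may degenerate, so $G^{ii}h_{ii}^{2}\geq G^{11}h_{11}^{2}$ gives nothing. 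Its actual role is only to cancel the genuinely dangerous piece $-h_{11}G^{ii}h_{ii}^{2}$ after dividing by $h_{11}$, and for that you need the coefficient to be exactly calibrated: with $\eta=-\log\tau$ the first-order condition $\nabla W=0$ makes the gradient terms cancel only when $A=1$; taking ``$A$ large relative to $B$'' produces, via $\tau_{i}=\Lambda_{i}h_{ii}$, a term $(A-A^{2})G^{ii}\tau_{i}^{2}/\tau^{2}$ that is negative and quadratic in $h_{ii}$ and destroys the estimate. Finally, the conclusion should read $c_{k-1}h_{11}\leq C$ (linear, after dividing by $h_{11}$), not $h_{11}^{2}\leq C(1+h_{11})$.

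For comparison, the paper avoids all of this by working with the mean curvature: since $\mathcal{G}$ is $2$-admissible, $|\lambda_{i}|\leq c_{6}H$, and one tests $W=\log H-\log\tau$. Summing the Ricci identity over $p$ turns the commutator into $|A|^{2}G^{ii}h_{ii}-HG^{ii}h_{ii}^{2}$; the second piece is cancelled exactly by $-G^{ii}\tau_{ii}/\tau$ via Lemma \ref{C2-2}, and the first gives $\frac{|A|^{2}}{H}\,\alpha_{k-1}\geq\frac{H}{n}c_{k-1}$, which dominates every remaining term once $|G_{l}|$ is controlled by the dichotomy $\sigma_{k}/\sigma_{k-1}\lessgtr H^{1/k}$. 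No auxiliary $\Lambda$-term, no large constants, and no Guan--Zhang-type index splitting are required. If you insist on $\lambda_{\max}$ rather than $H$, the argument still closes, but only after you correct the sign of the commutator and recognize $h_{11}^{2}G^{ii}h_{ii}$ as the term that does the work.
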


\begin{proof}
Since $k\geq 2$, $\mathcal{G}$ is $2$-admissible, for sufficiently
large $c_{6}$, one has
\begin{eqnarray*}
|\lambda_{i}|\leq c_{6}H,
\end{eqnarray*}
where the positive constant $c_{6}$ depends on $n$, $k$. So, we only
need to estimate the mean curvature $H$ of $\mathcal{G}$. Taking the
auxiliary function
\begin{eqnarray*}
W(x)=\log H-\log\tau.
\end{eqnarray*}
Assume that $x_{0}$ is the maximum point of $W$. Then at $x_{0}$,
one has
\begin{eqnarray}\label{W-1}
0=W_{i}=\frac{H_{i}}{H}-\frac{\tau_{i}}{\tau}
\end{eqnarray}
and
\begin{eqnarray}\label{W-2}
0\geq W_{ij}(x_{0})=\frac{H_{ij}}{H}-\frac{\tau_{ij}}{\tau}.
\end{eqnarray}
Choosing a suitable coordinate system $\{x^{1},x^{2},...,x^{n}\}$ in
the neighborhood of $X_{0}=\left(u(x_{0}),x_{0}\right)\in
\mathcal{G}$ such that the matrix $(h_{ij})_{n\times n}$ is diagonal
at $X_{0}$,  i.e., $h_{ij}=h_{ii}\delta_{ij}$. This implies at
$x_{0}$,
\begin{eqnarray}\label{W-ieq-1}
0\geq
G^{ij}W_{ij}(x_{0})=\sum\limits_{p=1}^{n}\frac{1}{H}G^{ii}h_{ppii}
-\frac{G^{ii}\tau_{ii}}{\tau}.
\end{eqnarray}
By \eqref{ricci-eq}, we can obtain
\begin{eqnarray*}
\begin{split}
h_{ppii}=&h_{iipp}+h_{pp}^{2}h_{ii}-h_{ii}^{2}h_{pp}+\bar{R}_{0iip;p}
+\bar{R}_{0pip;i}-2h_{pp}\bar{R}_{piip}\\
&+h_{ii}\bar{R}_{0i0i}+h_{pp}\bar{R}_{0ii0}
+h_{ii}\bar{R}_{0p0p}+h_{ii}\bar{R}_{0ii0}-2h_{ii}\bar{R}_{ipip}.
\end{split}
\end{eqnarray*}
Note that
\begin{eqnarray*}
G^{ij}h_{ij}=G-\sum\limits_{l=0}^{k-2}(k-l)\alpha_{l}G_{l}
=\alpha_{k-1}(u,x,t)-\sum\limits_{l=0}^{k-2}(k-l)\alpha_{l}G_{l}.
\end{eqnarray*}
So, we have
\begin{eqnarray*}
\begin{split}
\sum\limits_{p}G^{ii}h_{ppii}=&\sum\limits_{p}G^{ii}\left(h_{iipp}+\bar{R}_{0iip;p}
+\bar{R}_{0pip;i}\right)-\sum\limits_{p}h_{pp}G^{ii}\left(h_{ii}^{2}+
2\bar{R}_{piip}-\bar{R}_{0ii0}\right)\\
&+\sum\limits_{p}G^{ii}h_{ii}\left(h_{pp}^{2}-2\bar{R}_{ipip}
+\bar{R}_{0i0i}+\bar{R}_{0p0p}+\bar{R}_{0ii0}\right)\\
\geq& \sum\limits_{p}G^{ii}h_{iipp}+\left(|A|^{2}-c_{8}\right)
\left(\alpha_{k-1}(u,x,t)-\sum\limits_{l=0}^{k-2}(k-l)\alpha_{l}G_{l}\right)
-c_{7}\\
&-HG^{ii}\left(h_{ii}^{2}+c_{9}\right),
\end{split}
\end{eqnarray*}
where the positive constant $c_{7}$ depends on the $C^{1}$ bound of
the curvature tensor $\bar{R}$, the positive constants $c_{8}$,
$c_{9}$ depend on the $C^{0}$ bound of the curvature tensor
$\bar{R}$. Together with Lemma \ref{C2-1}, we know that
\eqref{W-ieq-1} becomes
\begin{eqnarray*}
\begin{split}
0\geq&\frac{1}{H}\sum\limits_{p=1}^{n}G^{ii}h_{iipp}+\frac{|A|^{2}-c_{8}}{H}
\left(\alpha_{k-1}(u,x,t)-\sum\limits_{l=0}^{k-2}(k-l)\alpha_{l}G_{l}\right)-\frac{G^{ii}\tau_{ii}}{\tau}\\
&-\frac{c_{7}}{H}\sum\limits_{i}G^{ii}-G^{ii}(h_{ii}^{2}+c_{9})\\
\geq&\frac{1}{H}\sum\limits_{p=1}^{n}\left(\nabla_p\nabla_p\alpha_{k-1}(u,x,t)-
\sum\limits_{l=0}^{k-2}\frac{1}{1+\frac{1}{k+1-l}}\frac{t(\nabla_p\alpha_l)^2}{\alpha_l}G_l
-\sum\limits_{l=0}^{k-2}t\nabla_p\nabla_p\alpha_lG_l\right)-\frac{G^{ii}\tau_{ii}}{\tau}\\
&+\frac{|A|^{2}-c_{8}}{H}\left(\alpha_{k-1}(u,x,t)-\sum\limits_{l=0}^{k-2}(k-l)t\alpha_{l}G_{l}\right)
-\frac{c_{7}}{H}\sum\limits_{i}G^{ii}-G^{ii}(h_{ii}^{2}+c_{9}).\\
\end{split}
\end{eqnarray*}
By Lemma \ref{C2-2}, the above inequality becomes
\begin{eqnarray*}
\begin{split}
0\geq&\frac{1}{H}\sum\limits_{p=1}^{n}\left(\nabla_p\nabla_p\alpha_{k-1}(u,x,t)-
\sum\limits_{l=0}^{k-2}\frac{1}{1+\frac{1}{k+1-l}}\frac{t(\nabla_p\alpha_l)^2}{\alpha_l}G_l
-\sum\limits_{l=0}^{k-2}t\nabla_p\nabla_p\alpha_lG_l\right)\\
&+\frac{|A|^{2}-c_{8}}{H}\left(\alpha_{k-1}(u,x,t)-\sum\limits_{l=0}^{k-2}(k-l)t\alpha_{l}G_{l}\right)
-\frac{c_{7}}{H}\sum\limits_{i}G^{ii}-G^{ii}(h_{ii}^{2}+c_{9})\\
&-\frac{1}{\tau}\left(\nabla_p\alpha_{k-1}(u,x,t)-\sum\limits_{l=0}^{k-2}t\nabla_p\alpha_{l}G_{l}
+\sum\limits_{p}G^{ii}\bar{R}_{0iip}\right)\left\langle V,E_{p}\right\rangle\\
&-\frac{f'}{\tau}\left(\alpha_{k-1}(u,x,t)-\sum\limits_{l=0}^{k-2}(k-l)t\alpha_{l}G_{l}\right)
+G^{ii}h_{ii}^{2}.
\end{split}
\end{eqnarray*}
Hence, we have
\begin{eqnarray*}
\begin{split}
0\geq&\frac{|A|^{2}}{H}\left(\alpha_{k-1}(u,x,t)-\sum\limits_{l=0}^{k-2}(k-l)t\alpha_{l}G_{l}\right)
-\left(\frac{c_{7}}{H}+c_{9}\right)\sum\limits_{i}G^{ii}-\frac{\left<V,E_{p}\right>}{\tau}
\sum\limits_{p}G^{ii}\bar{R}_{0iip}\\
&+\frac{1}{H}\sum\limits_{p=1}^{n}\left(\nabla_p\nabla_p\alpha_{k-1}(u,x,t)
-\sum\limits_{l=0}^{k-2}t\nabla_p\nabla_p\alpha_lG_l\right)
-\frac{\left<V,E_{p}\right>}{\tau}\left(\nabla_p\alpha_{k-1}(u,x,t)-\sum\limits_{l=0}^{k-2}t
\nabla_p\alpha_{l}G_{l}\right)\\
&-\left(\frac{c_{8}}{H}+\frac{f'}{\tau}\right)
\left(\alpha_{k-1}(u,x,t)-\sum\limits_{l=0}^{k-2}(k-l)t\alpha_{l}G_{l}\right).
\end{split}
\end{eqnarray*}
A direction calculation implies
\begin{eqnarray}\label{nabla-ieq}
|\nabla_p\alpha_{k-1}(u,x,t)|\leq c_{10},\quad
|\nabla_p\nabla_p\alpha_{k-1}(u,x,t)|\leq c_{11}(1+H),
\end{eqnarray}
where the positive constant $c_{10}$ depends on
$|\alpha_{l}|_{C^{1}}$, and the positive constant $c_{11}$ depends
on $|\alpha_{l}|_{C^{2}}$. So
\begin{eqnarray*}
\begin{split}
-\frac{1}{H}&c_{12}\left(\sum\limits_{l=0}^{k-2}|G_{l}|+1\right)(H+1)
-c_{13}\left(\sum\limits_{l=0}^{k-2}|G_{l}|+1\right)\\
\leq&\frac{1}{H}\sum\limits_{p=1}^{n}\left(\nabla_p\nabla_p\alpha_{k-1}(u,x,t)
-\sum\limits_{l=0}^{k-2}t\nabla_p\nabla_p\alpha_lG_l\right)
-\frac{\left\langle
V,E_{p}\right\rangle}{\tau}\left(\nabla_p\alpha_{k-1}(u,x,t)-\sum\limits_{l=0}^{k-2}t
\nabla_p\alpha_{l}G_{l}\right)\\
&-\left(\frac{c_{8}}{H}+\frac{f'}{\tau}\right)
\left(\alpha_{k-1}(u,x,t)-\sum\limits_{l=0}^{k-2}(k-l)t\alpha_{l}G_{l}\right)
\end{split}
\end{eqnarray*}
where the positive constant $c_{12}$ depends on $c_{8}$, $c_{10}$,
the $C^{1}$ bound of $f$, and the positive constant $c_{13}$ depends
on $c_{8}$, $c_{11}$, the $C^{1}$ bound of $f$. Then, together with
the fact $|A|^{2}\geq\frac{1}{n}H^{2}$, we have
\begin{eqnarray*}
\begin{split}
\frac{1}{n}&H\alpha_{k-1}(u,x,t)-\left(\frac{c_{7}}{H}+c_{14}\right)\sum\limits_{i}G^{ii}\\
&\leq\frac{|A|^{2}}{H}\left(\alpha_{k-1}(u,x,t)-\sum\limits_{l=0}^{k-2}(k-l)t\alpha_{l}G_{l}\right)
-\left(\frac{c_{7}}{H}+c_{9}\right)\sum\limits_{i}G^{ii}-\frac{\left\langle
V,E_{p}\right\rangle}{\tau} \sum\limits_{p}G^{ii}\bar{R}_{0iip},
\\
\end{split}
\end{eqnarray*}
where the positive constant $c_{14}$ depends on $c_{9}$, the $C^{0}$
bound of the curvature tensor $\bar{R}$. Combining the fact
$\Sigma_{i}G^{ii}\geq \frac{n-k+1}{k}$ with the above two
inequalities, we have
\begin{eqnarray*}
0\geq\frac{1}{n}H\alpha_{k-1}(u,x,t)
-\left(\frac{c_{7}}{H}+c_{14}\right)-\frac{1}{H}c_{12}\left(\sum\limits_{l=0}^{k-2}|G_{l}|+1\right)(H+1)
-c_{13}\left(\sum\limits_{l=0}^{k-2}|G_{l}|+1\right).
\end{eqnarray*}
Let us divide the rest of the proof into two cases:

 Case I. If $\frac{\sigma_{k}}{\sigma_{k-1}}\leq H^{\frac{1}{k}}$, then
we get from $\alpha_{l}\geq c_{l}$ that
\begin{eqnarray*}
|G_{l}|=\frac{\sigma_{l}}{\sigma_{k-1}}\leq\frac{1}{\alpha_{l}}\left(\frac{\sigma_{k}}
{\sigma_{k-1}}+\alpha_{l}(u,x,t)\right)\leq
c_{15}(H^{\frac{1}{k}}+1),
\end{eqnarray*}
where the positive constant $c_{15}$ depends on $c_{l}$,
$|\alpha_{l}|_{C^{0}}$. Thus, we have a contradiction when $H$ is
large enough, which implies $H\leq C$.

Case II. If $\frac{\sigma_{k}}{\sigma_{k-1}}> H^{\frac{1}{k}}$, then
by Lemma \ref{NM-ieq}, one has
\begin{eqnarray*}
|G_{l}|=\frac{\sigma_{l}}{\sigma_{k-1}} \leq
\frac{\sigma_{l}}{\sigma_{l+1}}\cdot\frac{\sigma_{l+1}}{\sigma_{l+2}}
\cdot\cdot\cdot\cdot\frac{\sigma_{k-2}}{\sigma_{k-1}} \leq
c_{16}\left(\frac{\sigma_{k-1}}{\sigma_{k}}\right)^{k-1-l} \leq
H^{-\frac{k-1-l}{k}},
\end{eqnarray*}
where the constant $c_{16}>0$ depends on $k$. In this case, we can
also derive $H\leq C$ easily.

In sum, the conclusion of Lemma \ref{C2} follows directly by using
the fact $|\lambda_{i}|\leq c_{6}H$.
\end{proof}

\section{Existence}  \label{S6}

In this section, we use the degree theory for nonlinear elliptic
equations developed in \cite{L} to prove Theorem \ref{maintheorem}.

After establishing a prior estimates (see Lemmas \ref{C0 estimate},
\ref{C1 estimate} and \ref{C2}), we know that the Eq.
\eqref{equation 1.1} is uniformly elliptic. By \cite{E}, \cite{K}
and Schauder estimates, we have
\begin{eqnarray}\label{Ex-1}
|u|_{C^{4,\alpha}(M^{n})}\leq C
\end{eqnarray}
for any $k$-convex solution $\mathcal{G}$ to the equation
\eqref{equation 1.1}. Define
\begin{eqnarray*}
C_{0}^{4,\alpha}(M^{n})=\{u\in C^{4,\alpha}(M^{n}):
\mathcal{G}=\{\left(u(x),x\right)|x\in M^{n}\}~~{\mathrm{is}}~~
{k\mathrm{-convex}}\}.
\end{eqnarray*}
Let us consider the function
\begin{eqnarray*}
F(\cdot;t):C_{0}^{4,\alpha}(M^{n})\rightarrow C^{2,\alpha}(M^{n}),
\end{eqnarray*}
which is defined by
\begin{eqnarray*}
F(u,x,t)=\frac{\sigma_{k}(\kappa(V))}{\sigma_{k-1}(\kappa(V))}-\sum\limits_{l=0}^{k-2}t
\alpha_{l}(u,x)\frac{\sigma_{l}(\kappa(V))}{\sigma_{k-1}(\kappa(V))}-\alpha_{k-1}(u,x,t).
\end{eqnarray*}
Set
\begin{eqnarray*}
\mathcal{O}_{R}=\{u\in C_{0}^{4,\alpha}(M^{n}):|u|_{C^{4,\alpha}(M^{n})}<R\},
\end{eqnarray*}
which clearly is an open set in $C_{0}^{4,\alpha}(M^{n})$. Moreover,
if $R$ is sufficiently large, $F(u,x,t)=0$ does not have solution on
$\partial\mathcal{O}_{R}$ by the priori estimate established in
\eqref{Ex-1}. Therefore, the degree
$\deg\left(F(\cdot;t),\mathcal{O}_{R},0\right)$ is well-defined for
$0\leq t\leq 1$. Using the homotopic invariance of the degree, we
have
\begin{eqnarray*}
\deg(F(\cdot;1),\mathcal{O}_{R},0)=\deg(F(\cdot;0),\mathcal{O}_{R},0).
\end{eqnarray*}
Lemma \ref{uni-sol} shows that $u=u_{0}$ is the unique solution to
the above equation for $t=0$. By direct calculation, one has
\begin{eqnarray*}
F(su_{0},x;0)=[1-\varphi(su_{0})]\frac{\sigma_{k}(e)}{\sigma_{k-1}(e)}\frac{f'(su_{0})}{f(su_{0})}.
\end{eqnarray*}
Using the fact $\varphi(u_{0})=1$, we have
\begin{eqnarray*}
\delta_{u_{0}}F(u_{0},x;0)=\frac{d}{ds}\Bigg{|}_{s=1}F(su_{0},x;0)=
-\varphi'(u_{0})\frac{\sigma_{k}(e)}{\sigma_{k-1}(e)}\frac{f'(u_{0})}{f(u_{0})}>0,
\end{eqnarray*}
where $\delta F(u_{0},x;0)$ is the linearized operator of $F$ at
$u_{0}$. Clearly, $\delta F(u_{0},x;0)$ has the form
\begin{eqnarray*}
\delta_{\omega}F(u_{0},x;0)=-a^{ij}\omega_{ij}+b^{i}\omega_{i}
-\varphi'(u_{0})\frac{\sigma_{k}(e)}{\sigma_{k-1}(e)}\frac{f'(u_{0})}{f(u_{0})}\omega,
\end{eqnarray*}
where $(a^{ij})_{n\times n}$ is a positive definite matrix. Since
$-\varphi'(u_{0})\frac{\sigma_{k}(e)}{\sigma_{k-1}(e)}\frac{f'(u_{0})}{f(u_{0})}>0$,
then $\delta F(u_{0},x;0)$ is an invertible operator. Therefore,
\begin{eqnarray*}
\deg(F(\cdot;1),\mathcal{O}_{R},0)=\deg(F(\cdot;0),\mathcal{O}_{R},0)=\pm
1,
\end{eqnarray*}
which implies that we can obtain a solution at $t=1$. This finishes
the proof of Theorem \ref{maintheorem}.

\vspace {5 mm}

\section*{Acknowledgments}
This work is partially supported by the NSF of China (Grant Nos.
11801496 and 11926352), the Fok Ying-Tung Education Foundation
(China) and  Hubei Key Laboratory of Applied Mathematics (Hubei
University). The authors sincerely thank Mr. Agen Shang and Prof.
Qiang Tu for sending the digital version of the reference \cite{st1}
to them.

\vspace {1 cm}

\end{document}